\chardef\bslash=`\\ % p. 424, TeXbook
\newcommand{\cl}{\operatorname{Cl}}
\newcommand{\diff}{\operatorname{\mathfrak{X}^1(M)}}
\newcommand{\per}{\operatorname{Per}}
\newcommand{\sink}{\operatorname{Sink}}
\newcommand{\sou}{\operatorname{Source}}
\newcommand{\sad}{\operatorname{Saddle}}
\newcommand{\di}{\operatorname{div}}
\newcommand{\dis}{\operatorname{Dis}}
\newcommand{\m}{\operatorname{Leb}}
\newcommand{\SR}{{\mathcal R}}
\newcommand{\SU}{{\mathcal U}}
\newcommand{\Crit}{\operatorname{Crit}}
\newcommand{\Sing}{\operatorname{Sing}}
\newcommand{\psad}{\operatorname{PSaddle}}
\newtheorem{theorem}{Theorem}[section] % 1st argument is your name for it
\newtheorem{lemma}[theorem]{Lemma}     % 2nd argument is what is printed
\newtheorem{corollary}[theorem]{Corollary}
\newtheorem{proposition}[theorem]{Proposition}
\newtheorem{defi}[theorem]{Definition}
\newtheorem{conjecture}[theorem]{Conjecture}
\newtheorem*{aldro}{Theorem A}
\newtheorem*{mata}{Theorem B}
\newtheorem*{hausdorff-limit}{Theorem E}
\newcommand{\eval}[2][\right]{\relax
  \ifx#1\right\relax \left.\fi#2#1\rvert}
\title[Existence of attractors for three-dimensional flows]{Existence of attractors for three-dimensional flows}
\author{C.A. Morales}
\address{Instituto de Matem\'atica, Universidade Federal do Rio de Janeiro, P. O. Box 68530, 21945-970 Rio
de Janeiro, Brazil.}
\email{morales@impa.br}
\thanks{Partially supported by CNPq, FAPERJ and PRONEX/DS from Brazil.}
\subjclass[2010]{Primary: 37D20; Secondary: 37C70}
\keywords{Singular-hyperbolic Attractor, Sink, Three-dimensional flow.}
\begin{document}

%\part{Use this type of header for very long papers only}
% use lowercase except for proper names
\maketitle
\section{Introduction} % use lowercase except for proper names
\label{intro}

\noindent
Araujo proved in the eighties that every $C^1$ generic surface diffeomorphism has either infinitely many attracting periodic orbits
(often called {\em sinks})
or else finitely many hyperbolic attractors whose basins form a full Lebesgue measure in the ambient manifold \cite{A}.
Therefore, hyperbolic attractors do exist for all $C^1$ generic surface diffeomorphisms (solving a question by Ren\'e Thom \cite{pp}).
It is natural to ask if these results hold for three-dimensional flows instead of surface diffeomorphisms.
Although this is true for nonsingular flows (as proved recently in \cite{ams}), the answer for this question is negative.
In fact, \cite{mp} used the geometric Lorenz attractor \cite{gw} in order to construct an open set of three-dimensional flows in the sphere $S^3$
for which there are no hyperbolic attractors.
But this last example
exhibits a singular-hyperbolic attractor (e.g. the geometric Lorenz one) whose basin has full Lebesgue measure.
It is then reasonable to ask if
Araujo's holds for three-dimensional flows
but replacing the hyperbolic attractor alternative
by the hyperbolic or singular-hyperbolic attractor one.

In this paper we will give positive answer for this last question.
More precisely,
we will prove that every $C^1$ generic three-dimensional flow
has either infinitely many sinks or finitely many hyperbolic or singular-hyperbolic attractors whose basins form a full Lebesgue measure set.
In particular,
every $C^1$ generic three-dimensional flow carries hyperbolic or singular-hyperbolic attractors.
The proof will use a recent result by Crovisier and Yang \cite{cy}.
Let us state our results in a precise way.

Hereafter, the term {\em three-dimensional flow} will be referred to a $C^1$ vector field on  a Riemannian compact connected boundaryless
three-dimensional manifold
$M$.
The corresponding space equipped with the $C^1$ vector field topology will be denoted by $\diff$.
The flow of $X\in\diff$ is denoted by $X_t$, $t\in\mathbb{R}$.

By a {\em critical point} of $X$ we mean a point $x$ satisfying
$X_t(x)=x$ for some $t\geq 0$. 
If this is satisfied for every $t\geq0$ we say that
$x$ is a {\em singularity}, otherwise it is a {\em periodic point}.
For every periodic point we have a minimal $t>0$ satisfying
$X_t(x)=x$. The minimal of such $t$'s is the period of $x$ denoted by
$t_x$ (or $t_{x,X}$ to indicate $X$). We denote by $\Crit(X)$ the set of critical points, by $\Sing(X)$ the set of singularities
and by $\per(X)$ the set of periodic points thus
$\Crit(X)=\Sing(X)\cup\per(X)$.
The {\em eigenvalues} of a critical point $x$ are either those of the linear automorphism $DX_{t_x}(x): T_xM\to T_xM$
not corresponding to the eigenvector $X(x)$ (periodic case) or those of $DX(x): T_xM\to T_xM$ (singular case).
We say that $x$ is a {\em sink} if its eigenvalues either are less than $1$ in modulus
(periodic case) or else with negative real part (singular case).
A {\em source} is a sink for the time-reversed flow $-X$.
The set of sinks and sources of $X$ will be denoted by $\sink(X)$ and $\sou(X)$ respectively.
A critical point is {\em hyperbolic} if it has no eigenvalues of modulus $1$ (periodic case) or
with zero real part (singular case).

For every point $x$ we define its {\em omega-limit set},
$$
\omega(x)=\left\{y\in M : y=\lim_{t_k\to\infty}X_{t_k}(x)\mbox{ for some integer sequence }t_k\to\infty\right\}.
$$
(If necessary we shall write $\omega_X(x)$ to indicate the dependence on $X$.)

We say that $\Lambda\subset M$ is
{\em invariant} if $X_t(\Lambda)=\Lambda$ for all $t\in\mathbb{R}$; and
{\em transitive} if there is $x\in\Lambda$ such that
$\Lambda=\omega(x)$.
The {\em basin} of any subset $\Lambda\subset M$ is defined by
$$
W^s(\Lambda)=\{y\in M : \omega(y)\subset \Lambda\}.
$$
(Sometimes we write $W^s_X(\Lambda)$ to indicate dependence on $X$).
An {\em attractor} is a transitive set $A$ exhibiting a neighborhood $U$ such that
$$
A=\displaystyle\bigcap_{t\geq0}X_t(U).
$$

A compact invariant set $\Lambda$ is {\em hyperbolic} if
there are a continuous $DX_t$-invariant tangent bundle decomposition
$T_\Lambda M=E_\Lambda^s\oplus E^X_\Lambda\oplus E_\Lambda^u$ over $\Lambda$
and positive numbers $K,\lambda$ such that $E^X_x$ is generated by $X(x)$, and for every $ (x,t)\in \Lambda\times\mathbb{R}^+$ we have
$$
\|DX_t(x)/E_x^s\|\leq Ke^{-\lambda t}
\mbox{ and } 
\|DX_{-t}(x)/E^u_{X_t(x)}\|\leq Ke^{-\lambda t},
$$
A {\em dominated splitting} for $X$ over an invariant set $I$
is a continuous tangent bundle $DX_t$-invariant splitting $T_I M=E_I\oplus F_I$
for which there are positive constants $K,\lambda$ satisfying
$$
\|DX_t(x)/E_x\|\cdot\|DX_{-t}(X_t(x))/F_{X_t(x)}\|\leq Ke^{-\lambda t},
\quad\quad\textrm{ for all }(x,t)\in I\times\mathbb{R}^+.
$$
We say that the dominating subbundle $E$ above is {\em contracting} if
$$
\|DX_t(x)/E_x\|\leq Ke^{-\lambda t},
\quad\quad\textrm{ for all } (x,t)\in I\times\mathbb{R}^+
$$
and that the central subbundle $F$ is {\em volume expanding} if
$$
|\det DX_t(x)/F_x|\geq K^{-1}e^{\lambda t},
\quad\quad\textrm{ for all } (x,t)\in I\times\mathbb{R}^+.
$$

A compact invariant set is {\em partially hyperbolic} if it has a dominated splitting with contracting
dominating direction.

A partially hyperbolic set $\Lambda$ is {\em singular-hyperbolic for $X$}
if the singularities of $X$ in $\Lambda$ are all hyperbolic and the central subbundle $F$ is volume expanding.
A {\em hyperbolic} (resp. {\em singular-hyperbolic}) {\em attractor (for $X$)} is an attractor which is simultaneously a hyperbolic
(resp. singular-hyperbolic) set for $X$.

We call $\mathcal{R}\subset \diff$ {\em residual} if it is a countable intersection of open and dense subsets.
We say that a {\em $C^1$ generic three-dimensional flow satisfies a certain property P} if
there is a residual subset $\mathcal{R}$ of $\diff$ such that $P$ holds for every element of $\mathcal{R}$.

With these definitions we can state our main result.

\begin{aldro}
\label{thaldro}
A $C^1$ generic three-dimensional flow has either infinitely many sinks or
finitely many hyperbolic or singular-hyperbolic attractors whose basins form a full Lebesgue measure set of $M$.
\end{aldro}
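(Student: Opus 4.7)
The plan is to treat the second alternative in the dichotomy: I may assume $X$ lies in a residual set $\mathcal{R}\subset\diff$ and that $|\sink(X)|<\infty$, and must then produce finitely many hyperbolic or singular-hyperbolic attractors whose basins cover Lebesgue almost every point of $M$. The overall strategy is to identify the candidate attractors as the Lyapunov stable chain-recurrence classes of $X$, invoke the Crovisier--Yang result \cite{cy} to endow them with the appropriate partially hyperbolic (and, in the singular case, volume-expanding) structure, and then translate this structural information into the dichotomy stated by the theorem.

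First, I would invoke a generic measure-theoretic ingredient: there is a residual set $\mathcal{R}_0\subset\diff$ such that for each $X\in\mathcal{R}_0$, Lebesgue almost every $x\in M$ satisfies $\omega_X(x)\subset C$ for some Lyapunov stable chain-recurrence class $C$ (the diffeomorphism version is due to Bonatti--Crovisier; the flow version was used in \cite{ams}). This reduces the problem to analysing the \emph{essential} Lyapunov stable classes of $X$, i.e.\ those whose basins carry positive Lebesgue measure. Classes that reduce to a single critical orbit are, by Kupka--Smale-type genericity, hyperbolic; the only such orbits capable of attracting positive Lebesgue measure are sinks, of which there are finitely many by our assumption.

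For a non-trivial essential Lyapunov stable class $C$, the Crovisier--Yang dichotomy \cite{cy} provides, for $C^1$ generic $X$, a dominated splitting $T_C M=E^s\oplus F$ with $E^s$ one-dimensional and uniformly contracting. If $C\cap\Sing(X)=\emptyset$, then Pujals--Sambarino-type arguments (as used in the nonsingular three-dimensional case of \cite{ams}) promote this splitting to uniform hyperbolicity, so that $C$ is a hyperbolic attractor. If $C$ contains a singularity $\sigma$, then under the genericity and finitely-many-sinks hypotheses $\sigma$ must be a Lorenz-like hyperbolic saddle, and Crovisier--Yang moreover forces volume expansion of $F$ along $C$, so that $C$ is singular-hyperbolic. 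In either case the Lyapunov stability of $C$ combined with the (partial) hyperbolic structure yields a trapping neighbourhood, upgrading $C$ from a quasi-attractor to an honest attractor.

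Finiteness of the resulting list of attractors follows from a compactness/perturbation argument: infinitely many such attractors would accumulate on a compact invariant set $\Lambda$, and a Hayashi-type perturbation inside $\Lambda$ would either create new sinks (contradicting $|\sink(X)|<\infty$ on a residual set) or destroy the attractor property. The measure-theoretic statement from the second paragraph then gives the full-Lebesgue-measure conclusion for the union of the (finitely many) basins. The main obstacle lies in the singular case of the third paragraph: ruling out Lyapunov stable chain-recurrence classes that contain a singularity but fail to be singular-hyperbolic. This is precisely where three-dimensionality and the Crovisier--Yang theorem enter crucially, and where the proof must go beyond the nonsingular argument of \cite{ams} and the surface-diffeomorphism argument of Araujo \cite{A}.
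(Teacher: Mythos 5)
Your overall philosophy is aligned with the paper's (reduce to finitely many hyperbolic or singular-hyperbolic attractors, lean on Crovisier--Yang for the singular case), but the route you sketch diverges from the one actually taken, and it has two genuine gaps.

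\textbf{Different route.} You organize the argument around Lyapunov stable chain-recurrence classes and a Bonatti--Crovisier-type generic statement that almost every orbit is trapped by one. The paper instead works with the dissipative region $\dis(X)=\cl(\Crit_d(X))$ and the generic fact that $W^s_w(\dis(X))$ has full Lebesgue measure (Theorem \ref{move-attractor}); it then establishes a LPF-dominated splitting on $\cl(\psad_d(X))$, feeds this into Theorem B to get a \emph{finite} spectral decomposition into hyperbolic pieces and singular-hyperbolic attractors for $\pm X$, and finally uses Theorem \ref{fui} and Lemma \ref{AO} to decide which pieces carrying positive-measure basins are genuine attractors. These are honestly different scaffoldings: the paper's buys finiteness and the attractor property essentially for free from the spectral decomposition, whereas yours has to supply both by hand.

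\textbf{Gap 1: finiteness.} Your finiteness argument --- infinitely many attractors accumulate on some compact set, and ``a Hayashi-type perturbation inside $\Lambda$ would either create new sinks\ldots or destroy the attractor property'' --- is not an argument at a fixed generic $X$; once $X$ is pinned down in the residual set you cannot perturb it, so you would need to phrase this as a density statement and then re-genericize, and it is not clear how to close that loop. The paper's finiteness is proved inside Theorem B: if infinitely many distinct homoclinic classes $H(p_k)$ occurred, their closure either contains no singularity (hence is hyperbolic by Lemma \ref{Q1}, and a hyperbolic set contains only finitely many homoclinic classes, contradiction) or contains a singularity, which by Proposition \ref{thhausdorff-limit} lies in a singular-hyperbolic attractor; since $\Sing(X)$ is finite, distinct $k,k'$ give $H(p_k)=H(p_{k'})$, again a contradiction. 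This is the step your sketch is missing.

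\textbf{Gap 2: from domination to singular-hyperbolicity.} You state that in the singular case Crovisier--Yang ``forces volume expansion of $F$ along $C$.'' As quoted in the paper (Theorem \ref{CY}), Crovisier--Yang's Theorem 1 only concludes that $\Gamma$ \emph{has a dominated splitting} for the flow; it does not by itself give a partially hyperbolic, volume-expanding structure on a chain-recurrence class. The paper bridges this gap via Gan--Yang's Theorem C in \cite{gy}: once $H$ is the chain-recurrence class of a Lorenz-like singularity and is dominated for $X$, that result upgrades it to a singular-hyperbolic attractor. Your sketch elides this promotion step, which is not automatic and requires the additional Lyapunov-stability structure that Lemma \ref{branched} supplies.

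The nonsingular branch (Pujals--Sambarino-type promotion to hyperbolicity) and the measure-theoretic reduction are both in line with what the paper does, so those parts are fine in spirit, if compressed.
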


In particular, we obtain the existence of hyperbolic or singular-hyperbolic attractors
for $C^1$ generic three-dimensional flows:

\begin{corollary}
\label{coco1}
For every $C^1$ generic three-dimensional flow, there exists a hyperbolic or singular-hyperbolic attractor.
\end{corollary}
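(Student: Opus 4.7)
The plan is to derive Corollary \ref{coco1} directly from Theorem A by a simple case analysis on the two alternatives provided by that theorem. Apply Theorem A to the residual set $\mathcal{R}\subset\diff$ on which its conclusion holds, and fix $X\in\mathcal{R}$. There are two mutually exclusive situations to treat.

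\textbf{Case 1 (infinitely many sinks).} First I would argue that any sink $p$ of $X$ is itself a hyperbolic attractor in the sense of the excerpt. Indeed, in the periodic case the eigenvalues of $DX_{t_p}(p)$ transverse to the flow direction have modulus less than $1$, and $E^X_p$ is generated by $X(p)$, so the splitting $T_{\mathcal{O}(p)}M = E^s \oplus E^X$ realises $\mathcal{O}(p)$ as a hyperbolic set (with the unstable bundle trivial); asymptotic stability gives a trapping neighbourhood $U$ with $\bigcap_{t\geq 0}X_t(U) = \mathcal{O}(p)$, and $\mathcal{O}(p)=\omega(p)$ is transitive. The singular case is analogous: a singular sink is a hyperbolic singularity (no eigenvalue of zero real part), and is trivially transitive and admits a trapping neighbourhood. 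Hence in this case infinitely many hyperbolic attractors exist, in particular at least one.

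\textbf{Case 2 (finitely many hyperbolic or singular-hyperbolic attractors with basins of full Lebesgue measure).} Let $A_1,\dots,A_k$ be this finite family. Since $M$ is a non-trivial manifold, $\Leb(M)>0$; so $\bigcup_{i=1}^k W^s(A_i)$ having full Lebesgue measure forces the union to be non-empty, and therefore $k\geq 1$. Thus $X$ carries at least one attractor of the required type.

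In both cases $X$ admits a hyperbolic or singular-hyperbolic attractor, which proves the corollary. The only non-routine point is the verification in Case 1 that the paper's definitions of ``attractor'' and ``hyperbolic set'' are indeed satisfied by an individual sink; this is entirely classical but should probably be spelled out for the singular sink, since the orbit there is a single point and one must check that the trivial splitting $T_pM = E^s_p\oplus E^X_p\oplus E^u_p$ with $E^X_p = E^u_p = 0$ (or with $E^X_p = \{0\}$ in the definition's convention, since $X(p)=0$) is compatible with the hyperbolicity definition stated above.
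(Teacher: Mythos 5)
Your proof is correct and matches what the paper leaves implicit: the corollary is presented as an immediate consequence of Theorem~A with no separate argument, and your two-case analysis is exactly the intended reading. Your observation in Case~1 that each sink is itself a hyperbolic attractor (with trivial unstable bundle, in either the periodic or singular case) is the one small point the paper takes for granted, and your verification of it against the paper's definitions of ``hyperbolic set'' and ``attractor'' is accurate.
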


To prove Theorem A we will need the existence of a spectral decomposition of a certain invariant set.
To introduce it we will need some preliminars.

A critical point is a {\em saddle} if it has eigenvalues of modulus less and bigger than $1$ simultaneously
(periodic case) or with positive and negative real part simultaneously (singular case). The set of periodic saddles of $X$ is denoted by $\psad(X)$.

As is well known \cite{hps}, through any $x\in\psad(X)$ it passes a pair of invariant manifolds,
the so-called strong stable and unstable manifolds $W^{ss}(x)$ and $W^{uu}(x)$,
tangent at $x$ to the eigenspaces corresponding to the eigenvalue of modulus less and bigger than $1$ respectively.
Saturating these manifolds with the flow we obtain the stable and unstable manifolds $W^s(x)$ and $W^u(x)$ respectively.
A {\em homoclinic point} associated to $x$ is a point $q$ where $W^s(x)$ and $W^u(x)$ meet.
We say that $q$ is a {\em transverse homoclinic point} if $T_qW^s(x)\cap T_qW^u(x)$ is one-dimensional,
otherwise we call it {\em homoclinic tangency}.
The {\em homoclinic class} associated to $x$, denoted by $H(x)$, is the closure of the set of transverse homoclinic points associated to $x$.
We write $H_X(x)$ to indicate dependence on $X$.
By a homoclinic class we mean the homoclinic class associated to some saddle of $X$.
We denoted by $\cl(\cdot)$ the closure operator.

\begin{defi}
A non-empty subset $\mathcal{P}\subset \psad(X)$
is {\em homoclinically closed} if $H(p)\subset \cl(\mathcal{P})$ for every $p\in\mathcal{P}$.
\end{defi}

Basic examples of homoclinically closed subsets
are $\psad(X)$ itself and also the set $\psad_d(X)$ of {\em dissipative saddles},
i.e., those saddles for which the product of the eigenvalues is less than $1$ in modulus.
This follows from the Birkhoff-Smale Theorem \cite{hk}.

\begin{defi}
We say that a compact invariant set of $X$ {\em has a spectral decomposition} if
it is a finite disjoint union of transitive sets,
each one being either hyperbolic 
or a singular-hyperbolic attractor for either $X$ or $-X$.
\end{defi}

The following result will give a sufficient condition for existence of spectral decomposition for the closure
of homoclinically closed subsets of saddles.
Given $\Lambda\subset M$ we define
$\Lambda^*=\Lambda\setminus \Sing(X)$.
We define the vector bundle $N^X$ over $M^*$ whose fiber at $x\in M^*$ is the
the orthogonal complement of $X(x)$ in $T_xM$.

Denoting the projection $\pi_x: T_xM\to N_x^X$ we define the {\em Linear Poincar\'e flow} (LPF),
$P^X_t: N^X\to N^X$, by
$P^X_t(x)=\pi_{X_t(x)}\circ DX_t(x)$ whenever $t\in\mathbb{R}$.
We say that $\Lambda$ of $X$ {\em has a LPF-dominated splitting} if
$\Lambda^*\neq\emptyset$ and there exist a continuous tangent bundle decomposition $N^X=N^{s,X}\oplus N^{u,X}$ over $\Lambda^*$
with $dim N^{s,X}_x=dim N^{u,X}_x=1$ (for every $x\in\Lambda^*$) and $T>0$ such that 
$
\left\|P^X_{T}(x)/N^{s,X}_x\right\|\left\|P^X_{-T}(Y_T(x))/N^{u,X}_{X_T(x)}\right\|\leq\frac{1}{2}$, $\forall x\in\Lambda^*.$

With these definitions we obtain the following result.

\begin{mata}
\label{peorrito}
Let $X$ be a $C^1$ generic three-dimensional flow and
$\mathcal{P}\subset \psad(X)$ be homoclinically closed.
If $\cl(\mathcal{P})$ has a LPF-dominated splitting, then
$\cl(\mathcal{P})$ has a spectral decomposition.
\end{mata}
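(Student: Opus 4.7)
The strategy is to decompose $\cl(\mathcal{P})$ into finitely many pairwise disjoint homoclinic classes using the LPF-dominated splitting together with $C^1$ genericity, and then invoke the Crovisier--Yang classification \cite{cy} on each piece.

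\emph{Step 1 (reducing to homoclinic classes).} Since $\mathcal{P}$ is homoclinically closed, for every $p\in \mathcal{P}$ we have $H(p)\subset \cl(\mathcal{P})$. For $C^1$ generic $X$, any two homoclinic classes are either equal or disjoint and each periodic saddle is contained in a unique homoclinic class; moreover the saddles are dense in every chain recurrence class intersecting a homoclinic class. Thus the family $\{H(p):p\in\mathcal{P}\}$ is a partition of a dense subset of $\cl(\mathcal{P})$ into homoclinic classes.

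\emph{Step 2 (finiteness).} I would then rule out infinitely many distinct classes $H(p_i)\subset \cl(\mathcal{P})$. The hypothesis provides uniform constants $K,\lambda,T$ for the LPF-dominated splitting $N^{s,X}\oplus N^{u,X}$ over $(\cl(\mathcal{P}))^*$, which are inherited by each $H(p_i)^*$. Using standard $C^1$ generic tools for flows (Pugh's closing lemma, Hayashi's connecting lemma, and a Ma\~n\'e-type dichotomy adapted to the Linear Poincar\'e flow), an infinite family of distinct homoclinic classes sharing the same LPF-dominated splitting would allow one to produce, by perturbation inside $\cl(\mathcal{P})$, arbitrarily many sinks or sources; this is incompatible with the generic hypothesis (where $\Sink(X)\cup\Sou(X)$ is locally constant on a residual). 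Hence there exist $p_1,\dots,p_N\in \mathcal{P}$ with
\[
\cl(\mathcal{P})=\bigsqcup_{i=1}^{N} H(p_i).
\]

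\emph{Step 3 (classifying each piece).} Each $H(p_i)$ is a homoclinic class carrying a LPF-dominated splitting with one-dimensional subbundles, and all its singularities are hyperbolic by genericity. The Crovisier--Yang theorem \cite{cy} then forces the following trichotomy for $H(p_i)$: it is either hyperbolic for $X$, or a singular-hyperbolic attractor for $X$, or a singular-hyperbolic attractor for $-X$ (the last case arising when the roles of $N^{s,X}$ and $N^{u,X}$ are reversed for the time-reversed flow). In every case $H(p_i)$ is transitive, being a homoclinic class. Combining with Step~2 gives the spectral decomposition required by the definition.

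\emph{Main obstacle.} The technical heart of the argument is Step 2: excluding an infinite accumulation of distinct homoclinic classes inside $\cl(\mathcal{P})$. The LPF-dominated splitting is what makes this feasible, but one must handle carefully the interaction of the LPF with the singularities in $\cl(\mathcal{P})$, since the Poincar\'e flow degenerates there. The remaining step that needs care is the verification that the non-hyperbolic pieces are actually attractors rather than merely invariant singular-hyperbolic sets; this is precisely the output of the Crovisier--Yang dichotomy and is the reason a recent deep result is invoked rather than just Ma\~n\'e's classical techniques.
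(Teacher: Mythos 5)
Your high-level skeleton (split into homoclinic classes, show finiteness, classify each piece) matches the paper's, but there are two substantive problems that mean the proposal as written does not close the argument.

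First, you misattribute the classification in Step 3 to the Crovisier--Yang theorem. The statement of Theorem~1 in \cite{cy} used in the paper is much weaker than a trichotomy into hyperbolic/singular-hyperbolic attractors: under hypotheses on the singularities (Lorenz-like with $W^{ss}(\sigma)\cap\Gamma=\{\sigma\}$) and on periodic orbits, it concludes only that $\Gamma$ is \emph{dominated} for the tangent flow. Getting from a dominated chain-recurrent class with singularities to an actual singular-hyperbolic attractor is a separate step, obtained in the paper from Theorem~C of Gan--Yang \cite{gy}, combined with the Lyapunov-stability and dense-branches results of \cite{cmp}, \cite{mp} (Lemma~\ref{branched}). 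You would need to add that layer, otherwise Step~3 does not produce attractors. Also, \cite{cy} is a $C^3$ result; transferring its conclusion to a $C^1$ generic flow requires the semicontinuity/Kupka--Smale approximation machinery of Lemma~\ref{basicLemma}, which you do not address.

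Second, you do not verify the hypotheses of Crovisier--Yang on the singularities, and this is where much of the paper's work lies. One must show that every singularity in $\cl(\mathcal{P})$ is Lorenz-like for $X$ or $-X$ and that the strong stable manifold meets the relevant invariant set only at $\sigma$. This is done in the paper by taking Hausdorff limits $H$ of periodic orbits in $\mathcal{P}$ accumulating on $\sigma$, showing (Lemma~\ref{reslor}, Lemma~\ref{lor}, via \cite{d}) that such $\sigma$ is Lorenz-like with $H\cap W^{ss,X}(\sigma)=\{\sigma\}$, and then separating the cases by $Ind(\sigma)$ (Corollary~\ref{coro}). Without this, you cannot even invoke \cite{cy}. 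Relatedly, the paper's finiteness argument (your Step~2) is not the perturbative ``produce infinitely many sinks'' sketch you propose; it goes through Lemma~\ref{Q1} (a nonsingular Hausdorff limit would be hyperbolic, and a hyperbolic set contains finitely many homoclinic classes) and Proposition~\ref{thhausdorff-limit} (each singularity lies in a singular-hyperbolic attractor, and there are only finitely many singularities). Your Step~2 is a plausible heuristic but would need a genuine argument; as stated it is not a proof.
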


This result will be proved in the next section using the recent work \cite{cy} by Crovisier and Yang
(\footnote{Arbieto and Santiago \cite{as} will prove Theorem A and Corollary \ref{coco1} without appealing to \cite{cy}}).

Theorem B is strong enough to solve a conjecture in \cite{mp}. Indeed,
define the nonwandering set $\Omega(X)$ of a flow $X$ as the set of those points $x\in M$ such that for every
neighborhood $U$ of $x$ and every $T>0$ there is
$t>T$ such that $X_t(U)\cap U\neq\emptyset$. Clearly $\Omega(X)$ is a nonempty compact invariant set of $X$.
We say that $X$ is {\em singular-Axiom A} if $\Omega(X)$
has a spectral decomposition \cite{mp}.
In that a case we say that $X$ has no cycles if there are not
finitely many nonsingular orbits joining the pieces of the spectral decomposition in a cyclic way.
A {\em robustly singular-Axiom A flow} a flow
for which every nearby flow is singular-Axiom A.
Now we state the aforementioned conjecture in \cite{mp}
(see Conjecture 1.3 in p. 1577 of \cite{mp}):

\begin{conjecture}
\label{thC}
Every three-dimensional flow can be $C^1$ approximated by a flow exhibiting a homoclinic tangency or by a singular-Axiom A flow
without cycles.
\end{conjecture}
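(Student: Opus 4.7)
The plan is to use Theorem B to treat every flow that lies outside the closure of the set of flows with a homoclinic tangency. Let $\ST\subset\diff$ denote the (open) set of flows exhibiting a transverse intersection \emph{and} a homoclinic tangency, and let $\SU=\diff\setminus\cl(\ST)$, which is open. It suffices to prove that every $X\in\SU$ can be $C^1$ approximated by a singular-Axiom A flow without cycles, since every $X\in\cl(\ST)$ is trivially approximated by a flow with a homoclinic tangency.

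Fix $X\in\SU$ and a $C^1$ neighborhood $\SV\subset\SU$ of $X$. Inside $\SV$, I would work with a $C^1$ residual subset $\SR$ satisfying the standard generic dichotomies: (i) Pugh's general density theorem, so that $\Omega(Y)=\cl(\Crit(Y))$ for $Y\in\SR$; (ii) continuity of $Y\mapsto H_Y(p)$ at continuations of saddles, so $\psad(Y)$ is homoclinically closed in the strong sense of the paper; and (iii) the main output of Crovisier--Yang \cite{cy}, which asserts that for $C^1$ generic flows \emph{far from tangencies} every homoclinic class carries a LPF-dominated splitting. Picking $Y\in\SR$ arbitrarily close to $X$, hypothesis (iii) applied class by class (together with the homoclinically closed structure of $\psad(Y)$) yields a LPF-dominated splitting $N^Y=N^{s,Y}\oplus N^{u,Y}$ over $\cl(\psad(Y))^*$.

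Now Theorem B applies with $\SP=\psad(Y)$: the compact invariant set $\cl(\psad(Y))$ admits a spectral decomposition into finitely many pairwise disjoint transitive pieces, each being hyperbolic or a singular-hyperbolic attractor for $Y$ or $-Y$. Combined with (i), and since the remaining critical points in $\Omega(Y)\setminus\cl(\psad(Y))$ are hyperbolic sinks, sources, or hyperbolic singularities (absorbed as trivial pieces of the decomposition), one concludes that $\Omega(Y)$ itself decomposes spectrally, i.e.\ $Y$ is singular-Axiom A. The final step is a $C^1$ small perturbation breaking the (finitely many) cycles among the pieces, via the flow version of Smale's cycle-breaking argument (using the connecting lemma and control of unstable manifolds inside small flow boxes away from singularities). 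The perturbation can be chosen small enough to remain inside $\SV\subset\SU$, producing a singular-Axiom A flow without cycles arbitrarily $C^1$-close to $X$.

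The main obstacle I expect is the last step: the cycle-breaking perturbation must preserve the spectral decomposition produced by Theorem B and, crucially, must not create new cycles by perturbing inside small neighborhoods of the singularities of the singular-hyperbolic pieces, where the strong stable foliation meets orbits accumulating on the singularity. A secondary technical point is ensuring that the LPF-dominated splitting furnished by \cite{cy} over each homoclinic class extends continuously to the whole of $\cl(\psad(Y))^*$; this is where the ``homoclinically closed'' hypothesis of Theorem B is essential, and one must verify that $\psad(Y)$ indeed satisfies $H(p)\subset\cl(\psad(Y))$ generically, which follows from continuity of homoclinic classes and the fact that transverse homoclinic points are accumulated by saddles (Birkhoff--Smale).
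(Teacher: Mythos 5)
Your proposal follows the same overall skeleton as the paper's argument (LPF-dominated splitting far from tangencies $\Rightarrow$ Theorem~B $\Rightarrow$ spectral decomposition $\Rightarrow$ singular-Axiom~A), but there are two substantive gaps that the paper's proof closes differently.

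First, you cannot simply ``absorb the sinks, sources, and hyperbolic singularities as trivial pieces of the decomposition'': a spectral decomposition in the sense of this paper is a \emph{finite} disjoint union of transitive sets, and you have said nothing that prevents $\sink(Y)\cup\sou(Y)$ from being infinite. The paper handles this explicitly via the inclusion (from Pujals--Sambarino \cite{PS})
$$(\cl(\sink(X))\setminus\sink(X))\cup(\cl(\sou(X))\setminus\sou(X))\subset\cl(\psad(X)),$$
so that any accumulation of sinks or sources would have to occur inside the finitely many pieces of the spectral decomposition of $\cl(\psad(X))$, which is impossible; this yields finiteness of $\sink(X)\cup\sou(X)$ and, together with Pugh's General Density Theorem, the identity $\Omega(X)=(\sink(X)\cup\sou(X))\cup\cl(\psad(X))$ as a finite union. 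Without the finiteness step your decomposition of $\Omega(Y)$ is not a spectral decomposition.

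Second, the cycle-breaking perturbation you propose is both unnecessary and, as you yourself note, fraught: controlling the effect of a local perturbation near the singularities of a singular-hyperbolic piece without destroying the spectral structure is delicate and nowhere justified in your sketch. The paper avoids this entirely by invoking the result of Carballo--Morales--Pacifico \cite{cmp} that, $C^1$ generically, the spectral decomposition (when it exists) already has no cycles; no perturbation is needed because the conclusion is being obtained for $C^1$ generic flows in the open set of flows far from tangencies, whence density follows. A smaller point: the source for the LPF-dominated splitting on $\cl(\psad(X))$ far from tangencies is Arroyo--Rodriguez Hertz \cite{ah}, not \cite{cy}; the role of \cite{cy} in this paper is the dominated-splitting criterion used inside the proof of Theorem~B, not the far-from-tangency domination. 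Also, your set $\ST$ (flows with a transverse intersection \emph{and} a tangency) is not the right complement: the correct open set is $R(M)$, the flows that cannot be $C^1$ approximated by flows exhibiting a homoclinic tangency.
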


Let us prove this conjecture using Theorem B.

\vspace{5pt}

Define $R(M)$ as the (open) set of three-dimensional flows which cannot be $C^1$ approximated by
flows with a homoclinic tangency.
{\em The following sequence of assertions should be understood for $C^1$ generic $X\in R(M)$.}

As is well-known \cite{ah}, $\cl(\psad(X))$ has a LPF-dominated splitting.
Since $\psad(p)$ is homoclinically closed by the Birkhoff-Smale Theorem \cite{hk}, we conclude from Theorem B that
$\cl(\psad(X))$ has a spectral decomposition.
But from the arguments in \cite{PS} we see that
$$
(\cl(\sink(X))\setminus \sink(X))\cup(\cl(\sou(X))\setminus\sou(X))\subset\cl(\psad(X)).
$$
As $\cl(\psad(X))$ has spectral decomposition, there are only finitely many orbits of sinks or sources close to it.
This together with the previous inclusion implies
$\cl(\sink(X))\setminus \sink(X)=\cl(\sou(X))\setminus \sou(X)=\emptyset$, or, equivalently, that
$\sink(X)\cup\sou(X)$ consists of finitely many orbits.
On the other hand, we have that
$$
\Omega(X)=(\sink(X)\cup \sou(X))\cup \cl(\psad(X))
$$
by Pugh's General Density Theorem \cite{p}.
Since $\cl(\psad(X))$ has a spectral decomposition and $\sink(X)\cup\sou(X)$ consists of finitely many orbits, we conclude that
$X$ is singular-Axiom A.
The nonexistence of cycles was proved earlier
\cite{cmp}. This ends the proof.
\qed

\vspace{5pt}

Conjecture \ref{thC} is closely related to the recent result announced by Crovisier and Yang in \cite{cy}:
Every three dimensional flow can be $C^1$ approximated by
robustly singular hyperbolic flows, or by flows with a
homoclinic tangency
(they claim to have solved a conjecture by Jacob Palis \cite{pa}). Indeed,
we do not know if the approximation by singular-Axiom A flows in the conjecture can be performed by
robustly singular-Axiom A flows. We stress that, unlike Axiom A flows, the singular-Axiom A flows without cycles
need not be robustly singular-Axiom A in general \cite{mpu}.

\section*{Acknowledgements}
This paper is a joint work with professors A. Arbieto and B. Santiago which, unfortunately,
wanted to be excluded from the author list by some unmathematical reasons.
All of us are in debit with professor E.R. Pujals and R. Potrie by helpful conversations and, specially,
the last one by finding inaccuracies in an earlier version of this work.

\section{Proof of Theorem B}

\noindent
In this section we shall prove Theorem B.
For this we need some preliminary results.
We start with the following consequence of Lemma 3.1 in \cite{bgy}.

\begin{lemma}
\label{Q1}
Every compact invariant set without singularities but with a LPF-dominated splitting
of a $C^1$ generic three-dimensional flow is hyperbolic.
\end{lemma}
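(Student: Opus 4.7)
The plan is to use Lemma 3.1 of \cite{bgy} as the main technical input and verify its hypotheses via standard $C^1$ generic tools. Since $\Lambda$ contains no singularities of $X$, the normal bundle $N^X$ is defined and continuous on all of $\Lambda$, and the given LPF-dominated splitting $N^X|_\Lambda = N^{s,X}\oplus N^{u,X}$ is an honest dominated splitting for the Linear Poincar\'e flow. I expect Lemma 3.1 of \cite{bgy} to reduce the uniform hyperbolicity of such a $\Lambda$ to a statement about the periodic orbits contained in it: namely, that every $p\in\Lambda\cap\per(X)$ is a hyperbolic saddle whose stable and unstable LPF-eigendirections coincide with $N^{s,X}_p$ and $N^{u,X}_p$, with contraction and expansion rates uniformly bounded away from $1$.

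I would verify this criterion for $C^1$ generic $X$ by a standard contradiction argument. Kupka--Smale genericity guarantees that every periodic orbit of $X$ is hyperbolic, and the LPF-domination then forces each $p\in\Lambda\cap\per(X)$ to be a saddle whose stable/unstable directions match the splitting. If uniformity of the rates failed, there would exist a sequence $p_n\in\Lambda\cap\per(X)$ along which, say, $\tfrac{1}{t_{p_n}}\log\left\|P^X_{t_{p_n}}(p_n)/N^{s,X}_{p_n}\right\|\to 0$. Franks' lemma for flows would then produce an arbitrarily $C^1$-small perturbation of $X$ carrying a non-hyperbolic periodic orbit near some $p_n$, which contradicts the fact that our generic $X$ lies in the (dense $G_\delta$) set of Kupka--Smale flows.

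The main obstacle I foresee is the flow version of Franks' lemma: the perturbation must be performed inside a flow-box transverse to $X$ and tangent to $N^{s,X}\oplus N^{u,X}$, so that the emerging degeneracy shows up in the LPF-spectrum rather than being absorbed by the flow direction. Once this technical step is correctly set up, the lemma becomes a short chain of citations --- Lemma 3.1 of \cite{bgy} together with the Kupka--Smale and Pugh general density properties of $C^1$ generic flows --- applied to the LPF-dominated set $\Lambda$.
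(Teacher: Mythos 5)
Your proposal diverges substantially from the paper's argument, and it contains genuine gaps.

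First, you misstate the content of Lemma 3.1 in \cite{bgy}. What that lemma gives (and what the paper uses) is a residual set $\mathcal{Q}_1\subset\mathfrak{X}^1(M)$ such that every \emph{transitive} compact invariant set without singularities and with a LPF-dominated splitting is hyperbolic. It is \emph{not} a criterion that reduces hyperbolicity of $\Lambda$ to a statement about uniform rates along the periodic orbits in $\Lambda$; you would have to supply that reduction yourself, and you do not. Even if all periodic orbits in $\Lambda$ were uniformly hyperbolic saddles with matching LPF-eigendirections, it would still remain to show that $\Lambda$ as a whole is hyperbolic, and nothing in your proposal addresses this step.

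Second, the Franks' lemma argument is logically broken. You derive a contradiction by producing a $C^1$-small perturbation of $X$ that has a non-hyperbolic periodic orbit, and then claim this contradicts that the generic $X$ is Kupka--Smale. But the Kupka--Smale property is a statement about $X$ alone, not about nearby flows, and it is a residual (hence \emph{not} open) condition: the perturbed flow $Y$ is simply not Kupka--Smale, and there is no contradiction. To run a perturbation argument of the kind you sketch one needs to contradict something else --- typically the persistence of the LPF-dominated splitting, or the finiteness of sinks/sources, or some robustly defined invariant --- not the Kupka--Smale property of the unperturbed flow.

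The paper's proof takes a different and shorter route: having Lemma 3.1 of \cite{bgy} for transitive sets, it reduces the compact invariant case to the transitive one by a Pujals--Sambarino--type Zorn's Lemma argument. One extracts a minimally nonhyperbolic subset $\Lambda_0\subset\Lambda$; if $\Lambda_0$ were not transitive, every $\omega(x)$ and $\alpha(x)$ with $x\in\Lambda_0$ would be proper (hence hyperbolic) subsets, giving $\lim_{t\to\infty}\|P^X_t(x)/N^{s,X}_x\|=\lim_{t\to\infty}\|P^X_{-t}(x)/N^{u,X}_x\|=0$ for all $x\in\Lambda_0$, which by \cite{d} forces $\Lambda_0$ to be hyperbolic, a contradiction; hence $\Lambda_0$ is transitive and Lemma 3.1 of \cite{bgy} applies directly. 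You would do well to study this reduction, since it is exactly the missing bridge between ``information on orbits/subsets'' and ``hyperbolicity of the whole set.''
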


\begin{proof}
By Lemma 3.1 in \cite{bgy} we have that there is a residual subset $\mathcal{Q}_1$ of three-dimensional flows
for which every transitive set without singularities but with a LPF-dominated splitting is hyperbolic.
Fix $X\in\mathcal{Q}_1$ and a compact invariant set $\Lambda$ without singularities but with a LPF-dominated
splitting $N_\Lambda^X=N^{s,X}_\Lambda\oplus N^{u,X}_\Lambda$.
Suppose by contradiction that $\Lambda$ is not hyperbolic.
Then, by Zorn's Lemma, there is a minimally nonhyperbolic set $\Lambda_0\subset \Lambda$ (c.f. p.983 in \cite{PS}).
Assume for a while that $\Lambda_0$ is not transitive.
Then, $\omega(x)$ and $\alpha(x)=\omega_{-X}(x)$ are proper subsets of $\Lambda_0$, for every $x\in\Lambda_0$.
Therefore, both sets are hyperbolic and then we have
$$
\lim_{t\to\infty}\|P^X_t(x)/N^{s,X}_x\|=\lim_{t\to \infty}\|P^X_{-t}(x)/N^{u,X}_x\|=0,
\quad\textrm{ for all }
x\in \Lambda_0,
$$
which easily implies that $\Lambda_0$ is hyperbolic (see \cite{d}).
Since this is a contradiction, we conclude that $\Lambda_0$ is transitive.
As $X\in\mathcal{Q}_1$ and $\Lambda_0$ has a LPF-dominated splitting (by restriction), we conclude that $\Lambda_0$ is
hyperbolic, a contradiction once more proving the result.
\end{proof}

%The next two proposition will consider such limits containing singularities of index $2$.

Let $Y$ be a three-dimensional flow.
We say that $\sigma\in\Sing(Y)$ is {\em Lorenz-like for $Y$}
if its eigenvalues $\lambda_1,\lambda_2,\lambda_3$ are real and satisfy
$\lambda_2<\lambda_3<0<-\lambda_3<\lambda_1$ (up to some order).
The invariant manifold theory \cite{hps} asserts
the existence of {\em stable} and {\em unstable} manifolds denoted by
$W^{s}(\sigma)$, $W^{u}(\sigma)$ (or $W^{s,Y}(\sigma)$, $W^{u,Y}(\sigma)$ to emphasize $Y$)
tangent at $\sigma$ to the eigenvalues $\{\lambda_2,\lambda_3\}$ and $\lambda_1$ respectively.
There is an additional invariant manifold $W^{ss,Y}(\sigma)$, the {\em strong stable manifold}, contained in $W^{s,Y}(\sigma)$ and tangent at
$\sigma$ to the eigenspace corresponding to $\lambda_1$.

As in the remark after Lemma 2.13 in \cite{bgy} we obtain the following.

\begin{lemma}
\label{reslor}
If $X$ is a $C^1$ generic three-dimensional flow, then
every $\sigma\in\Sing(X)$ accumulated by periodic orbits is Lorenz-like, for either $X$ or $-X$, depending on whether $\sigma$ has three
real eigenvalues $\lambda_1,\lambda_2,\lambda_3$ satisfying either
$\lambda_2<\lambda_3<0<\lambda_1$ or
$\lambda_2<0<\lambda_3<\lambda_1$ (up to some order).
\end{lemma}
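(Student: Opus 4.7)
The proof plan is to follow the argument of \cite{bgy} Lemma~2.13 and the subsequent remark, instantiated in our $C^1$ generic $3$-dimensional setting. Fix a residual subset $\SG\subset\diff$ on which Kupka--Smale holds (so every singularity is hyperbolic with simple spectrum) and on which the conclusions of that lemma apply. Take $X\in\SG$ and a singularity $\sigma\in\Sing(X)$ accumulated by a sequence of periodic points $p_n\to\sigma$ lying on periodic orbits $O_n\subset\per(X)$.

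The main step is to extract from the accumulation a tangent-flow-dominated splitting at $\sigma$ of the form $T_\sigma M=E\oplus F$ with $\{\dim E,\dim F\}=\{1,2\}$. On the invariant set $\cl(\bigcup_n O_n)\setminus\Sing(X)$ the Linear Poincar\'e flow has, by genericity, a uniform dominated splitting. A rescaling limit (the ``extended linear Poincar\'e flow'' construction used in \cite{bgy}) then transfers this splitting through the singularity, producing the desired $DX_t$-invariant decomposition of $T_\sigma M$.

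From such a splitting the eigenvalue structure follows. A dominated $1$--$2$ decomposition of $e^{t DX(\sigma)}$ forces the eigenvalues of $DX(\sigma)$ to be real, since a complex conjugate pair would generate an irreducible invariant $2$-plane incompatible with a strictly dominated one-dimensional factor. Hyperbolicity (Kupka--Smale) together with the fact that $\sigma$ is neither a sink nor a source (it is accumulated by periodic orbits) forces the real eigenvalues into one of the two sign patterns in the statement. In each pattern, the Lorenz-like pinch --- $-\lambda_3<\lambda_1$ when $\lambda_2<\lambda_3<0<\lambda_1$, respectively $\lambda_3<-\lambda_2$ when $\lambda_2<0<\lambda_3<\lambda_1$ --- is equivalent to volume-expansion of the $2$-dimensional central factor in the relevant singular-hyperbolic splitting (for $X$, respectively for $-X$). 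This volume-expansion is inherited as a bound on the Lyapunov exponents of the LPF over the orbits $O_n$ as $p_n\to\sigma$, again by the argument of \cite{bgy}; the two patterns correspond to whether the $1$-dimensional factor sits on the ``lower'' side of the splitting (Lorenz-like for $X$) or on the ``upper'' side (Lorenz-like for $-X$ after time reversal).

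The critical difficulty is the transfer of domination from the LPF over $\bigcup_n O_n$, where it is defined, to the tangent space at $\sigma$, where the LPF is not defined. A blow-up argument is needed in order to track the limit of the normal bundle $N^X_{p_n}$ as $p_n\to\sigma$ and to identify the $2$-plane in $T_\sigma M$ to which it converges. This is exactly what \cite{bgy} Lemma~2.13 and the remark after it achieve in the $C^1$-generic $3$-dimensional setting; once it is in place, the algebraic consequences for the eigenvalues described above follow by direct translation.
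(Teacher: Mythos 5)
The paper offers no proof of this lemma beyond pointing to the remark after Lemma~2.13 of \cite{bgy}, so there is no detailed argument in the text to compare against; your proposal also defers to \cite{bgy} at the two places where real work would be needed, which in spirit matches the paper. The difficulty is with the reasoning you interpose between those deferrals. Your ``main step'' asserts that on the invariant set $\cl(\bigcup_n O_n)\setminus\Sing(X)$ the Linear Poincar\'e flow has, ``by genericity,'' a uniform dominated splitting. Domination over an arbitrary family of periodic orbits accumulating on $\sigma$ is \emph{not} a $C^1$-generic property; it requires extra structural input (being $C^1$-far from tangencies, as in $R(M)$, or an a priori LPF-dominated splitting as in the context of Lemma~\ref{lor}, which is where the present lemma is invoked). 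Lemma~\ref{reslor} as stated assumes only accumulation and the real-eigenvalue sign pattern, not domination, so you are importing a hypothesis it does not carry. (Incidentally, a dominated $1$--$2$ splitting of $e^{tDX(\sigma)}$ does not force the eigenvalues to be real --- the two-dimensional block can perfectly well carry a complex conjugate pair --- but this step is moot since the real-eigenvalue structure is already a hypothesis.)

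The remaining gap is more serious. The only content of the lemma beyond its hypotheses is the pinch $-\lambda_3<\lambda_1$ (equivalently $\lambda_1+\lambda_3>0$) in the first case and its time-reversal in the second. You correctly observe that this is equivalent to volume expansion of the two-dimensional central factor, but then assert that ``this volume-expansion is inherited as a bound on the Lyapunov exponents of the LPF over the orbits $O_n$ \dots\ again by the argument of \cite{bgy}.'' That is circular: volume expansion is exactly the conclusion to be proved, not something available from LPF-domination alone. What is missing is the generic-perturbation argument (Hayashi's connecting lemma together with a Franks-type perturbation of multipliers) showing that a saddle singularity with $\lambda_1+\lambda_3\leq 0$ that is accumulated by periodic orbits gives rise, after $C^1$-small perturbations, to robust periodic sinks near $\sigma$, and that this configuration is therefore non-generic. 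You identify the blow-up of the normal bundle as ``the critical difficulty,'' but that is a technical device; the step that carries the mathematical weight --- the one your sketch leaves entirely to the citation --- is this perturbative exclusion of the contracting case.
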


We shall use the following standard definitions.

%\vspace{0.1in}
%\emph{Hausdorff Limits}
%\vspace{0.1in}

\begin{defi}
The index $Ind(\sigma)$, of a singularity $\sigma$, is the number of eigenvalues with negative real part counted with multiplicity.
\end{defi}

\begin{defi}
The Hausdorff distance of two compact sets $A$ and $B$ is given by
$$d_h(A,B)=\max\left\{\sup_{x\in A}d(x,B),\sup_{y\in B}d(y,A)\right\}.$$
\end{defi}

It is well known that the space of compact subsets of $M$ is compact with this distance.
Clearly the Hausdorff limit of a sequence of periodic orbits is a nonempty compact invariant set.
Moreover, for $C^1$ generic flows the Hausdorff limit of sequences of periodic orbits
are characterized as the so-called chain-transitive sets \cite{c}, \cite{gy}.

The next result detects when a singularity is Lorenz-like through its index.

\begin{lemma}
\label{lor}
Let $H$ be the Hausdorff limit of a sequence of periodic orbits of a
$C^1$ generic three-dimensional flow $X$. If $H$ has a LPF-dominated splitting,
then every singularity $\sigma\in H\cap \Sing(X)$ satisfies one of the following:
\begin{enumerate}
\item
If $Ind(\sigma)=2$, then $\sigma$ is Lorenz-like for $X$ and
$
H\cap W^{ss,X}(\sigma)=\{\sigma\}.
$
\item
If $Ind(\sigma)=1$, then $\sigma$ is Lorenz-like for $-X$ and
$
H\cap W^{ss,-X}(\sigma)=\{\sigma\}.
$
\end{enumerate}
\end{lemma}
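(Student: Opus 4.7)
The plan is to deal with the two parts of the lemma separately: the Lorenz-like property is immediate from Lemma~\ref{reslor}, while the strong stable intersection requires a contradiction argument via the periodic-orbit approximation of $H$.

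For the Lorenz-like part, observe that $H$ being a Hausdorff limit of periodic orbits of $X$ implies every $\sigma\in H\cap\Sing(X)$ is accumulated by periodic orbits. Lemma~\ref{reslor} then furnishes three real eigenvalues $\lambda_1,\lambda_2,\lambda_3$ satisfying either $\lambda_2<\lambda_3<0<\lambda_1$, in which case $Ind(\sigma)=2$ and $\sigma$ is Lorenz-like for $X$, or $\lambda_2<0<\lambda_3<\lambda_1$, in which case $Ind(\sigma)=1$ and $\sigma$ is Lorenz-like for $-X$. This matches the two alternatives in (1) and (2).

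For the intersection equality in case (1), I would argue by contradiction: suppose there is $p\in H\cap W^{ss,X}(\sigma)$ with $p\neq\sigma$. Since $H$ is invariant (as a Hausdorff limit of invariant orbits) and $W^{ss,X}(\sigma)$ is flow-invariant, the whole forward orbit of $p$ sits in $H\cap W^{ss,X}(\sigma)\setminus\{\sigma\}$ and converges to $\sigma$. Next, pick periodic orbits $O_n\to H$ and points $p_n\in O_n$ with $p_n\to p$. For $n$ large, $p_n\notin W^{ss,X}(\sigma)$ (else $O_n$ would converge to $\sigma$, contradicting periodicity), but $p_n$ shadows the orbit of $p$ and passes arbitrarily close to $\sigma$ before escaping along $W^u(\sigma)$. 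Because the LPF-dominated splitting on $H^*$ extends by semi-continuity to a neighborhood, each $O_n^*$ carries a LPF-dominated splitting with essentially the same constants $T$ and $\frac{1}{2}$.

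The main obstacle is to extract a contradiction from this uniform splitting. Following the approach of the remark after Lemma~2.13 in \cite{bgy}, I would linearize $X$ in a small neighborhood $U$ of $\sigma$, express the LPF on the portion of $O_n$ inside $U$ in coordinates adapted to the eigenspaces of $DX(\sigma)$, and exploit an incompatibility between two facts: the entry of $O_n$ near $W^{ss,X}(\sigma)$ pins the direction $N^{s,X}$ along $O_n$ close to the eigenspace of $\lambda_2$, while the return of $O_n$ to a fixed cross-section far from $\sigma$ forces $N^{s,X}$ at $p_n$ to take a different limiting direction dictated by $p$. The quantitative mismatch, propagated through the $\lambda$-lemma and the uniform constant $\frac{1}{2}$ in the definition of LPF-domination, yields the required contradiction; notably, this contradiction is not visible at the purely linearized level (which itself admits a dominated splitting) and is the delicate technical step. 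Case (2) then follows by applying case (1) to the time-reversed flow $-X$: genericity, the Hausdorff limit hypothesis, and the LPF-dominated splitting are all invariant under time reversal, and the two alternatives of Lemma~\ref{reslor} are interchanged.
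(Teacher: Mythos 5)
Your overall plan (derive Lorenz-likeness and then argue the strong-stable intersection by contradiction through the periodic approximants, comparing LPF stable directions on entry near $W^{ss}(\sigma)$ and exit near $W^u(\sigma)$) is the same as the paper's. But there are two real gaps.

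First, Lemma~\ref{reslor} does not by itself produce three real eigenvalues with the stated ordering; it is conditional on $\sigma$ already having that real spectrum, and its content is the extra inequality $-\lambda_3<\lambda_1$ (or its $-X$ analogue) needed for Lorenz-likeness. The paper first uses the LPF-dominated splitting hypothesis, via Proposition~2.4 of \cite{d}, to show that $\sigma$ has three distinct real eigenvalues $\lambda_2<\lambda_3<0<\lambda_1$ (when $\mathrm{Ind}(\sigma)=2$), and only then applies Lemma~\ref{reslor}. Your writeup silently skips this and treats Lemma~\ref{reslor} as if it furnished reality of the spectrum; that step is missing.

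Second, your contradiction is not pinned down. The paper's argument rests on two concrete identifications from \cite{d}: for $x\in W^{ss,X}(\sigma)\setminus\{\sigma\}$, one has $N^{s,X}_x=N_x\cap T_xW^{s,X}(\sigma)$ (Proposition~2.2), so by continuity of the splitting on $\bigcup_n O_n\cup H$, the directions $N^{s,X}_{x_n}$ must converge to a line tangent to $W^{s,X}(\sigma)$ at $x$; and for $y\in W^{u,X}(\sigma)\setminus\{\sigma\}$ near $\sigma$, $N^{s,X}_y$ is almost parallel to $E^{ss,X}_\sigma$ (Proposition~2.4), so $N^{s,X}_{X_{t_n}(x_n)}$ is nearly along $E^{ss,X}_\sigma$, and the inequality $\lambda_2<\lambda_3$ then forces the backward LPF iterates $N^{s,X}_{x_n}=P^X_{-t_n}(X_{t_n}(x_n))\,N^{s,X}_{X_{t_n}(x_n)}$ to become \emph{transverse} to $W^{s,X}(\sigma)$ at $x$. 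These two limiting behaviours of $N^{s,X}_{x_n}$ are incompatible. Your sketch gestures at a ``quantitative mismatch propagated through the $\lambda$-lemma'' and invokes the domination constant $\tfrac12$, but never names the two competing limiting directions nor explains why they must disagree; in particular the remark about the contradiction not being visible at the linearised level is not part of the mechanism and rather obscures it. To make the proof complete you would need the analogue of the two propositions above (or a linearisation argument that actually computes both limits) and the observation that continuity of the LPF splitting across $H\cup\bigcup_n O_n$ is what clashes with the backward-iterate asymptotics.
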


\begin{proof}
We only prove (1) because (2) is similar.
Since $H$ has a LPF-dominated splitting, we obtain from
Proposition 2.4 in \cite{d}
that $\sigma$ has three different real eigenvalues $\lambda_1,\lambda_2,\lambda_3$
satisfying $\lambda_2<\lambda_3<0<\lambda_1$ (up to some order). Since $\sigma\in H$,
we obtain that $\sigma$ is accumulated by periodic orbits.
Then, $\sigma$ is Lorenz-like for $X$ by Lemma \ref{reslor}.

To prove $H\cap W^{ss,X}(\sigma)=\{\sigma\}$ we assume by contradiction that this is not the case.
Then, there is $x\in H\cap W^{ss,X}(\sigma)\setminus\{\sigma\}$.
Set $H=\lim_{n\to\infty}O_n$ where each $O_n$ is a periodic orbit of $X$.

Choose sequences $x_n\in O_n$ and $t_n\to\infty$ such that $x_n\to x$
and $X_{t_n}(x_n)\to y$ for some $y\in W^{u,X}(\sigma)\setminus\{\sigma\}$.
Let $N^{s,X}\oplus N^{u,X}$ denote the LPF-dominated splitting of $H$.
Since $H$ is clearly connected, we can assume without loss of generality that this splitting is defined in the union
$\bigcup_n O_n$ (see Lemma 2.29 p.41 in \cite{ap}).

On the one hand, $N^{s,X}_x=N_x\cap W^{s,X}(\sigma)$ by Proposition 2.2 in \cite{d} and so
{\em $N^{s,X}_{x_n}$ tends to be tangent to $W^{s,X}(\sigma)$ at $x$ for $n$ large}.

On the other hand, Proposition 2.4 in \cite{d} says that $N^{s,X}_{y}$ is almost parallel to $E^{ss,X}_\sigma$,
and so, the directions $N^{s,X}_{X_{t_n}(x_n)}$ tends to be parallel to $E^{ss,X}_\sigma$.

Since $\lambda_2<\lambda_3$ and $N^{s,X}_{x_n}=P_{-t_n}(X_{t_n}(x_n))N^{s,X}_{X_{t_n}(x_n)}$,
we conclude that {\em $N^{s,X}_{x_n}$ tends to be
transversal to $W^{s,X}(\sigma)$ at $x$ for $n$ large}.

Since these two behaviors are contradictory, we obtain the result.
\end{proof}

Recall that a compact invariant set $\Lambda$ of a flow $X$ is {\em Lyapunov stable for $X$}
if for every neighborhood $U$ of $\Lambda$ there is a neighborhood $V\subset U$ of $\Lambda$ such that
$X_t(V)\subset U$, for all $t\geq 0$.

Let $\Lambda$ be a
compact invariant set with singularities (all hyperbolic) of $X$. We say
that {\em $\Lambda$ has dense singular unstable} (resp. {\em stable) branches} if for every $\sigma\in \Lambda\cap\Sing(X)$ one has
$\Lambda=\omega(q)$ (resp. $\Lambda=\omega_{-X}(q)$) for all $q\in W^u(\sigma)\setminus\sigma$ (resp. $q\in W^s(\sigma)\setminus\{\sigma\}$).

The results in \cite{cmp}, \cite{mp} imply the following lemma.

\begin{lemma}
\label{branched}
If $H$ is the Hausdorff limit of a sequence of periodic orbits of a
$C^1$ generic three-dimensional flow $X$, then the following alternatives hold
for every $\sigma\in H\cap \Sing(X)$:
\begin{enumerate}
\item
If $Ind(\sigma)=2$, then
$\cl(W^u(\sigma))$ is a Lyapunov stable set for $X$ with dense singular unstable branches and
$\cl(W^u(\sigma))=H$.
\item
If
$Ind(\sigma)=1$, then
$\cl(W^s(\sigma))$ is a Lyapunov stable set for $-X$ with dense singular stable branches
and $\cl(W^s(\sigma))=H$.
\end{enumerate}
\end{lemma}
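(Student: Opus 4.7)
The plan is to reduce immediately to case (1), since case (2) follows by applying the same argument to the time-reversed flow $-X$ (genericity is preserved under $X\mapsto -X$ and $W^s_X=W^u_{-X}$). So assume $\sigma\in H\cap\Sing(X)$ with $Ind(\sigma)=2$. Because $H$ is the Hausdorff limit of periodic orbits of a $C^1$ generic flow, it is chain-transitive by the generic characterization of Hausdorff limits recalled in the excerpt, and $\sigma$ is in particular accumulated by periodic orbits. Lemma \ref{reslor}, applied with the eigenvalue count dictated by $Ind(\sigma)=2$, then gives that $\sigma$ is Lorenz-like for $X$.

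The core of the argument is to invoke the following generic fact extracted from \cite{cmp} and \cite{mp}: for a $C^1$ generic three-dimensional flow, whenever a Lorenz-like singularity $\sigma$ of $X$ is accumulated by periodic orbits, the closure $\cl(W^u(\sigma))$ is Lyapunov stable for $X$, has dense singular unstable branches at every singularity it contains, and coincides with the chain recurrence class $C(\sigma)$ of $\sigma$. I would cite this statement directly rather than re-prove it, as the technical heart is already established in those references: one uses Hayashi's connecting lemma to show that every point of $C(\sigma)$ can be approached along one branch of $W^u(\sigma)$, and then builds a trapping neighborhood to upgrade this to Lyapunov stability and to the identification with $C(\sigma)$.

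Granted this machinery, it remains to identify $H$ with $C(\sigma)$. The inclusion $H\subset C(\sigma)$ is immediate: $H$ is chain-transitive and contains $\sigma$, so it lies in the chain recurrence class of $\sigma$. For the reverse inclusion, note that for a $C^1$ generic flow, chain recurrence classes that contain periodic orbits are themselves Hausdorff limits of periodic orbits (by the Crovisier--Yang/Bonatti-Crovisier type results already underlying \cite{c},\cite{gy}); since $C(\sigma)$ is Lyapunov stable and the periodic orbits $O_n$ whose Hausdorff limit is $H$ accumulate $\sigma$, they must eventually enter a trapping neighborhood of $C(\sigma)$ and therefore remain in it, forcing $H\supset C(\sigma)$. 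Combining both inclusions gives $H=C(\sigma)=\cl(W^u(\sigma))$, and the Lyapunov stability and dense-unstable-branch properties transfer to $H$ verbatim.

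The main obstacle I anticipate is the reverse inclusion $C(\sigma)\subset H$: one must argue that the specific Hausdorff limit $H$ furnished in the hypothesis exhausts the whole chain class, not merely a chain-transitive piece of it. This is where genericity is indispensable, and where one cannot avoid invoking either Crovisier's theorem on approximation of chain classes by periodic orbits or the more refined rigidity statements of \cite{cmp, mp} that pin the class down to $\cl(W^u(\sigma))$. Once this identification is in place, all three conclusions of part (1) — Lyapunov stability, dense singular unstable branches, and $\cl(W^u(\sigma))=H$ — follow without further work.
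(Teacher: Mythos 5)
Your overall plan — reduce to case~(1) by time reversal, delegate the Lyapunov stability and dense singular branch properties of $\cl(W^u(\sigma))$ to \cite{cmp} and \cite{mp}, then identify this set with $H$ — matches the paper's one-line delegation to those references and is the right skeleton. But the step you yourself flag as the main obstacle, namely $C(\sigma)\subset H$, is exactly where your argument breaks, and the reasoning you supply there is backwards. You argue: the periodic orbits $O_n$ accumulate $\sigma$, so they eventually enter a trapping neighborhood $V$ of the Lyapunov stable set $C(\sigma)=\cl(W^u(\sigma))$ and remain in it, hence $H\supset C(\sigma)$. But trapping gives you $O_n\subset V$ for large $n$, hence $H=\lim O_n\subset \cl(V)$, and intersecting over shrinking trapping neighborhoods yields $H\subset C(\sigma)$ — the inclusion you already had from chain transitivity. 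Lyapunov stability of $C(\sigma)$ can never produce $C(\sigma)\subset H$: a Lyapunov stable set can trap orbits converging to a proper subset of itself.

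The missing idea is an inclination-lemma (or tubular-flow plus hyperbolic linearization) argument at the singularity. Since $\sigma\in H$ and each $O_n$ passes through an arbitrarily small neighborhood of $\sigma$, after the passage the orbit segment is thrown out along one of the branches of $W^u(\sigma)$, so $H$ contains at least one point $q\in W^u(\sigma)\setminus\{\sigma\}$. Because $H$ is compact and invariant, $\omega(q)\subset H$; and the dense singular unstable branch property from \cite{cmp}/\cite{mp} gives $\omega(q)=\cl(W^u(\sigma))$. This yields $\cl(W^u(\sigma))\subset H$, and combined with $H\subset C(\sigma)=\cl(W^u(\sigma))$ gives equality. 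That is the argument you actually need; the appeal to trapping regions should be replaced by it. A secondary point: invoking Lemma~\ref{reslor} to conclude that $\sigma$ is Lorenz-like requires first knowing the eigenvalues of $\sigma$ are real with the stated ordering, which in the paper comes from a LPF-dominated splitting (see Lemma~\ref{lor}); Lemma~\ref{branched} carries no such hypothesis, and the generic facts from \cite{cmp} you rely on do not need Lorenz-likeness, only that $\sigma$ is a saddle of the given index accumulated by recurrence. Dropping that detour would also make the proof cleaner.
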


Combining lemmas \ref{lor} and \ref{branched} we obtain the following result.

\begin{corollary}
\label{coro}
Let $H$ be the Hausdorff limit of a sequence of periodic orbits of a
$C^1$ generic three-dimensional flow $X$.
If $H$ has a LPF-dominated splitting, then one of the following alternatives hold:
\begin{enumerate}
\item
Every $\sigma\in H\cap \Sing(X)$ is Lorenz-like for $X$ and $H\cap W^{ss,X}(\sigma)=\{\sigma\}$.
\item
Every $\sigma\in H\cap \Sing(X)$ is Lorenz-like for $-X$ and $H\cap W^{ss,-X}(\sigma)=\{\sigma\}$.
\end{enumerate}
\end{corollary}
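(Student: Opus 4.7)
The plan is to show that the singularities of $H$ cannot have mixed indices: if any $\sigma\in H\cap\Sing(X)$ has $Ind(\sigma)=2$ then all singularities in $H$ do, which combined with Lemma \ref{lor}(1) immediately gives alternative (1); and symmetrically index $1$ yields alternative (2) through Lemma \ref{lor}(2). Lemma \ref{reslor} guarantees that every singularity in $H$ has index $1$ or $2$, and the case $H\cap\Sing(X)=\emptyset$ makes both alternatives vacuously true, so the entire corollary reduces to excluding the mixed configuration.

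I would argue by contradiction: suppose there exist $\sigma_2,\sigma_1\in H\cap\Sing(X)$ with $Ind(\sigma_2)=2$ and $Ind(\sigma_1)=1$. Lemma \ref{branched}(1) applied at $\sigma_2$ makes $H$ Lyapunov stable for $X$, while Lemma \ref{branched}(2) applied at $\sigma_1$ makes $H$ Lyapunov stable for $-X$. This \emph{bilateral} Lyapunov stability is the structural input that drives the rest of the argument.

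The next step is a standard absorption property: whenever a compact invariant set $\Lambda$ is Lyapunov stable for a flow $Y$ and $x\in\Lambda$, one has $W^{u,Y}(x)\subset\Lambda$. The check is short: for $y\in W^{u,Y}(x)$ the orbit $Y_{-t}(y)$ converges to $x$, so given any neighborhood $U$ of $\Lambda$, Lyapunov stability provides a positively $Y$-invariant neighborhood $V\subset U$; for $t_0$ large we have $Y_{-t_0}(y)\in V$ and hence $y=Y_{t_0}(Y_{-t_0}(y))\in U$, and letting $U$ shrink gives $y\in\Lambda$. Applied with $Y=-X$ at $x=\sigma_2$, this yields $W^{s,X}(\sigma_2)=W^{u,-X}(\sigma_2)\subset H$.

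To close the argument, recall that by Lemma \ref{lor}(1) the singularity $\sigma_2$ is Lorenz-like for $X$, so its strong stable manifold $W^{ss,X}(\sigma_2)$ is a one-dimensional submanifold of $W^{s,X}(\sigma_2)$. Combining with the previous step gives $W^{ss,X}(\sigma_2)\subset H$, and therefore $H\cap W^{ss,X}(\sigma_2)\supsetneq\{\sigma_2\}$, directly contradicting the identity $H\cap W^{ss,X}(\sigma_2)=\{\sigma_2\}$ supplied by the same Lemma \ref{lor}(1). This contradiction rules out mixed indices and finishes the proof. The only mildly delicate step I expect is keeping the stable/unstable roles straight under the $X\mapsto -X$ swap; once the bilateral Lyapunov stability is established, the clash with the ``$W^{ss}$ meets $H$ only at $\sigma$'' statement of Lemma \ref{lor} is essentially automatic.
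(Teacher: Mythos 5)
Your proof is correct. The paper offers no explicit argument (it merely says to combine Lemmas \ref{lor} and \ref{branched}), and the mechanism you spell out---Lyapunov stability of $H$ for $-X$ (from Lemma \ref{branched}(2) at $\sigma_1$) forcing $W^{s,X}(\sigma_2)=W^{u,-X}(\sigma_2)\subset H$ via the standard absorption property, which clashes with $H\cap W^{ss,X}(\sigma_2)=\{\sigma_2\}$ from Lemma \ref{lor}(1)---is exactly the intended combination.
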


This permits us to separate the Hausdorff limits (of sequences of periodic orbits) with both singularities
and LPF-dominated splitting in two cases depending on whether there is a singularity of index $1$ or $2$.

Next we formulate the key result below by Crovisier and Yang.

\begin{theorem}[Theorem 1 in \cite{cy}]
\label{CY}
Let $\Gamma$ be a compact invariant set with a LPF-dominated splitting of a $C^3$ three-dimensional flow $Y$.
If
every periodic point in $\Gamma$ is hyperbolic saddle,
every $\sigma\in \Lambda\cap\Sing(Y)$ is Lorenz-like satisfying $W^{ss}(\sigma)\cap \Gamma=\{\sigma\}$ and
$\Gamma$ does not contain a minimal repeller whose dynamics is the suspension of an irrational rotation of the circle,
then $\Gamma$ is {\em dominated} (i.e. has a dominated splitting) for $Y$.
\end{theorem}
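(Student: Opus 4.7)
The plan is to upgrade the LPF-dominated splitting $N^Y_{\Gamma^*}=N^{s,Y}\oplus N^{u,Y}$ to a genuine dominated splitting $T_\Gamma M = E \oplus F$ of the full tangent bundle over $\Gamma$, with $\dim E = 1$ and $\dim F = 2$. On the non-singular part $\Gamma^*$ the candidate is natural: let $E_x$ be the unique $DY_t$-invariant line whose projection to $N^Y_x$ is $N^{s,Y}_x$, and set $F_x = \mathbb{R}\cdot Y(x)\oplus \widetilde{N^{u,Y}_x}$, where the tilde denotes the analogous lift. The $DY_t$-invariance is automatic from the $P^Y_t$-invariance of the LPF splitting, and the domination estimate on $\Gamma^*$ transfers from the LPF-domination, because $\|DY_t|_{E_x}\|$ and $\|P^Y_t|_{N^{s,Y}_x}\|$ (and analogously on $F$) agree up to the bounded factor $\|Y(Y_t(x))\|/\|Y(x)\|$, which cancels in the product $\|DY_T|_{E}\|\cdot\|DY_{-T}|_{F}\|$.

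The real work, and the main obstacle, is to extend $E$ and $F$ continuously across each $\sigma \in \Gamma\cap\Sing(Y)$, with the domination constants preserved. Since $\sigma$ is Lorenz-like with eigenvalues $\lambda_2<\lambda_3<0<-\lambda_3<\lambda_1$, the natural values are $E(\sigma)=E^{ss,Y}(\sigma)$ (the $\lambda_2$-eigenspace) and $F(\sigma)$ the two-plane spanned by the $\lambda_3$ and $\lambda_1$ eigenvectors. I would proceed to show that, for any sequence $x_n\in\Gamma^*$ with $x_n\to\sigma$, the lines $E_{x_n}$ converge to $E^{ss,Y}(\sigma)$ and the planes $F_{x_n}$ converge to the $\{\lambda_3,\lambda_1\}$-plane. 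The hypothesis $W^{ss,Y}(\sigma)\cap\Gamma=\{\sigma\}$ enters decisively here: points $x_n\in\Gamma$ accumulating at $\sigma$ cannot sit in $W^{ss,Y}(\sigma)$, so by choosing a linearizing $C^3$ chart around $\sigma$ (Sternberg) and tracking how the LPF-dominated direction $N^{s,Y}_{x_n}$ sits inside the normal bundle, one can exploit the spectral gap $\lambda_2<\lambda_3$ to force $N^{s,Y}_{x_n}$ to align with the projection of $E^{ss,Y}(\sigma)$. Once the extension is continuous, the domination constants pass to the singularities by continuity of $DY_T$, completing the construction of the tangent dominated splitting on all of $\Gamma$.

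The most delicate ingredient is the alignment $E_{x_n}\to E^{ss,Y}(\sigma)$ for arbitrary sequences $x_n\to\sigma$ in $\Gamma^*$. This is where the hypothesis excluding a minimal repeller conjugate to the suspension of an irrational circle rotation is used. Without it, points of $\Gamma$ could accumulate at $\sigma$ along orbit segments that wind around the $\{\lambda_3,\lambda_1\}$-plane in an arithmetically recurrent way, and the LPF-dominated direction $N^{s,Y}$ could fail to select $E^{ss,Y}$ in the limit, being deflected toward the $\lambda_3$-direction instead. To rule this out, I would argue by contradiction: a failure of the alignment produces, via a limit procedure, an invariant compact set on which the LPF is uniformly contracting in the normal direction but not hyperbolic in the tangent direction; extracting an ergodic invariant measure and using Pesin theory (which is where the $C^3$ regularity is essential) one would show that the support of this measure is a minimal set whose normal dynamics reduces to a rotation, and hence the suspension must be a minimal repeller of rotation type, contradicting the hypothesis. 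With this rotational obstruction excluded, the linearization argument sketched above closes the proof.
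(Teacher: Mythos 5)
This statement is not proved in the paper: it is quoted verbatim as Theorem~1 of Crovisier--Yang \cite{cy} and used as a black box, so there is no proof in the source to compare your sketch against. The result is the principal technical theorem of that (long) preprint, and a two-paragraph outline cannot be expected to reconstruct it; what you give is an informed heuristic, not a proof.

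There is, moreover, a concrete gap at the very first step. You write that $E_x$ is ``the unique $DY_t$-invariant line whose projection to $N^Y_x$ is $N^{s,Y}_x$.'' No such canonical line exists. The preimage $\pi_x^{-1}(N^{s,Y}_x)$ is a $2$-plane $\widetilde{N}^{s}_x$ containing $Y(x)$; it is indeed $DY_t$-invariant, but the linear cocycle $DY_t|_{\widetilde{N}^{s}}$ on this plane bundle has the flow direction as one invariant line with multiplier $\|Y(Y_t(x))\|/\|Y(x)\|$ and, in general, \emph{no} complementary invariant line field --- such a complement exists only if the quotient dynamics $P^Y_t|_{N^{s,Y}}$ is dominated by the flow-direction multiplier, which is not among your hypotheses. (Oseledets gives a measurable invariant complement, not a continuous one.) The symmetric construction of $F_x$ has the same problem. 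The whole difficulty addressed by \cite{cy} is precisely that an LPF-dominated splitting does \emph{not} automatically lift to a tangent-bundle dominated splitting, especially near singularities where the LPF is not even defined; your opening move assumes this away. The remaining discussion (Sternberg linearization near $\sigma$, the role of $W^{ss}\cap\Gamma=\{\sigma\}$, the exclusion of rotational minimal repellers via an ergodic-measure/Pesin-theory argument) is plausible at the level of slogans but is not developed to the point where one could check any estimate, and in particular it is not clear how ``extracting an ergodic measure'' would produce a topological minimal repeller as opposed to merely a measure-theoretic obstruction. If you wish to use this theorem, cite \cite{cy}; do not attempt to reprove it in-line.
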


We shall use it to prove the following lemma.

\begin{lemma}
\label{basicLemma}
Let $\Lambda$ be a transitive set with a LPF-dominated splitting of a $C^1$ generic three-dimensional flow $X$.
If every singularity $\sigma\in \Lambda$ is Lorenz-like for $X$ satisfying $W^{ss}(\sigma)\cap \Lambda=\{\sigma\}$,
then $\Lambda$ is dominated for $X$.
\end{lemma}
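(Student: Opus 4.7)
The plan is to apply the Crovisier--Yang theorem (Theorem \ref{CY}) after approximating $X$ by $C^\infty$ (hence $C^3$) Kupka--Smale flows and passing to a Hausdorff limit. I first record the consequences of $C^1$ genericity that are available: by Kupka--Smale genericity every periodic orbit of $X$ is hyperbolic; and since $\Lambda$ is transitive and hence chain-transitive, the generic characterization quoted in the excerpt before Theorem \ref{CY} gives $\Lambda=\lim_{k\to\infty}O_k$ in Hausdorff distance for some sequence of hyperbolic periodic orbits $O_k$ of $X$. The LPF-dominated splitting of $\Lambda$ and the Lorenz-like/strong-stable-transversality condition on its singularities are the standing hypotheses of the lemma.

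Next I would approximate $X$ by $C^\infty$ Kupka--Smale flows $Y_n\to X$ in the $C^1$ topology. The hyperbolic periodic orbits $O_k$ and the Lorenz-like singularities of $X$ in $\Lambda$ persist for $Y_n$, with continuations $O_k^{(n)}$ and $\sigma^{(n)}$; these, together with a shadowing/closing argument, assemble into compact $Y_n$-invariant sets $\Gamma_n$ with $\Gamma_n\to\Lambda$ in Hausdorff distance. Since LPF-domination is a uniform open condition, $\Gamma_n$ inherits a LPF-dominated splitting with constants close to those on $\Lambda$. Continuity of eigenvalues and of local invariant manifolds yields that each $\sigma^{(n)}\in\Gamma_n$ is Lorenz-like for $Y_n$ with $W^{ss,Y_n}(\sigma^{(n)})\cap\Gamma_n=\{\sigma^{(n)}\}$, and Kupka--Smale for $Y_n$ combined with LPF-domination forces every periodic point of $Y_n$ in $\Gamma_n$ to be a hyperbolic saddle.

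The remaining hypothesis of Theorem \ref{CY}, and the step I expect to be the main obstacle, is ruling out a minimal repeller $R_n\subset\Gamma_n$ whose dynamics is the suspension of an irrational rotation of $S^1$. I would argue by contradiction: such an $R_n$ is a $2$-torus with irrational linear-flow dynamics and therefore contains no periodic orbits, so a Hausdorff subsequential limit $R\subset\Lambda$ inherits that property. But $\Lambda$, and hence $R$, is accumulated by the hyperbolic periodic orbits $O_k$ of $X$. The LPF-dominated splitting restricted to $R$ forces the rotation direction along $R$ (projected to $N^X$) to coincide with one of the $1$-dimensional subbundles $N^{s,X}$ or $N^{u,X}$, since any $P^X_t$-invariant line in a $2$-dimensional dominated splitting must be one of the extremes. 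Combining the neutral (rotation-type) LPF behaviour of this direction on $R$ with the exponential normal expansion forced by the repeller structure, and comparing with the hyperbolic saddle data of the approximating $O_k$ entering and exiting a repelling block of $R$, one produces a conflict with the LPF-domination inequality.

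With all hypotheses of Theorem \ref{CY} verified, each $\Gamma_n$ carries a dominated splitting $T_{\Gamma_n}M=E_n\oplus F_n$ for $Y_n$. Compactness of Grassmann bundles and the closedness of the domination inequality at a fixed time $T$ (which may be chosen uniform in $n$ from the uniform LPF-domination constants) allow extraction of a subsequential limit $T_\Lambda M=E\oplus F$ which is $DX_t$-invariant and satisfies the domination inequality on $\Lambda$. This furnishes the required dominated splitting on $\Lambda$ for $X$, completing the proof.
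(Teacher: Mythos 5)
Your overall strategy---upgrade to a $C^3$ flow, invoke the Crovisier--Yang theorem, and transfer the conclusion back to $X$---is the right instinct and matches the spirit of the paper, but the mechanism you use to carry out the transfer differs substantially from the paper's and has a genuine gap.

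The paper does \emph{not} pass a dominated splitting through a Hausdorff limit. Instead it runs a Baire-category argument. For a countable family $\{O_n\}$ of finite unions of basis elements, it introduces the set $\mathcal{D}_n$ of flows for which the chain-recurrent part $CR(\cdot,\cl(O_n))$ is empty or dominated, and the set $\mathcal{N}_n$ of flows for which $CR(\cdot,O_n)$ is neither empty nor dominated, and shows (via \cite{bgy}) that each $\mathcal{D}_n\cup\mathcal{N}_n$ is open and dense. For a generic $X$ in $\mathcal{G}=\bigcap_n(\mathcal{D}_n\cup\mathcal{N}_n)$ one argues by contradiction: if $\Lambda$ were not dominated then $X\in\mathcal{N}_n$ for a suitable $n$; since $\mathcal{N}_n$ is \emph{open}, one may pick a $C^3$ Kupka--Smale flow $Y$ still in $\mathcal{N}_n$, close enough that the LPF-domination and the Lorenz-like/strong-stable conditions persist on $\max(Y,\cl(O_n))$, and with no minimal repeller suspended from an irrational rotation (a dense condition). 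Applying Theorem~\ref{CY} to $\Gamma=CR(Y,\cl(O_n))\setminus(\sink(Y)\cup\sou(Y))$ yields that $\Gamma$ is dominated, contradicting $Y\in\mathcal{N}_n$. No limit of splittings is ever taken.

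This matters because the step where your argument actually breaks is exactly the one the paper's structure is designed to avoid. Theorem~\ref{CY} asserts that $\Gamma_n$ is dominated for $Y_n$ but gives no control on the domination constants (equivalently, on the domination time $T_n$). Your claim that $T$ ``may be chosen uniform in $n$ from the uniform LPF-domination constants'' is unjustified: the LPF-dominated splitting of the normal bundle and the dominated splitting of $T_\Lambda M$ produced by Crovisier--Yang are different objects, and nothing in the theorem transfers the LPF constant into a bound on $T_n$. If $T_n\to\infty$, a subsequential limit of the bundles $E_n\oplus F_n$ need not be dominated. The paper sidesteps this entirely: because $\mathcal{N}_n$ is open, non-domination for $X$ would force non-domination for a nearby $C^3$ flow, which Theorem~\ref{CY} forbids; the implication runs in the direction where no uniformity is needed.

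Two smaller issues: (i) the construction of $\Gamma_n$ by ``shadowing/closing'' the continuations $O_k^{(n)}$ is vague and it is not clear that it produces a compact invariant set converging to $\Lambda$; the paper instead works with $CR(Y,\cl(O_n))$ and $\max(Y,\cl(O_n))$, which have the needed semicontinuity built in. (ii) Your attempted direct exclusion of minimal repellers given by irrational-rotation suspensions is unconvincing---such a set is compatible with a LPF-dominated splitting (neutral direction dominated by the transverse repelling direction), so no contradiction arises from domination alone. The paper simply chooses the $C^3$ flow to avoid such repellers, using that their nonexistence is dense; you should do the same rather than argue from the $O_k$.
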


\begin{proof}
Indeed, the result is obtained as in the proof of Lemma 3.1 in \cite{bgy}
with Theorem \ref{CY} playing the role of Theorem B in \cite{ah}.
We include details for the sake of completeness.
For this we need some basic definitions.

A compact invariant set $\Lambda$ is called {\em chain transitive} for a flow $X$ if for any $\epsilon>0$ and any $x,y\in \Lambda$
there are points $x_0,\cdots, x_n\in \Lambda$ and numbers $t_0,\cdots, t_{n-1}\in [1,\infty[$
such that $x_0=x$, $x_n=y$ and $d(X_{t_i}(x_i),x_{i+1})<\epsilon$ for all $0\leq i\leq n-1$.
For any $K\subset M$ we define $CR(X,K)$ as the set of those points $x$ for which there is a chain-transitive set $\Lambda$ satisfying
$x\in \Lambda\subset K$. This is a compact invariant set of $X$ contained in $K$.
We also define the {\em maximal invariant set} of $X$ in $K$:
$$
\max(X,K)=\displaystyle\bigcap_{t\in \mathbb{R}}X_t(K).
$$

Take a countable basis $\{U_n\}$ of $M$ and let $\mathcal{O}=\{O_n\}$ be such that each $\mathcal{O}_n$
is a finite union of elements of $\{U_n\}$.
For each $n$ we define
$$
\mathcal{D}_n=\{X\in \diff:CR(X,\cl(O_n)) \mbox{ is } \emptyset \mbox{ or dominated  for }X\},
$$
and
$$
\mathcal{N}_n=\{X\in\diff:CR(X,O_n)\mbox{ is neither dominated nor }\emptyset\}.
$$
By lemmas 2.9 (which is true for dominated sets instead of hyperbolic sets) and 2.10 in \cite{bgy} we have that
$\mathcal{D}_n\cup\mathcal{N}_n$ is open and dense in $\diff$.
It follows that
$$
\mathcal{G}=\displaystyle\bigcap_n(\mathcal{D}_n\cup\mathcal{N}_n)
$$
is residual in $\diff$. Let us prove that every $X\in \mathcal{G}$ satisfies the conclusion of the lemma.
Indeed, take $\Lambda$ as in the hypothesis of the lemma and suppose by contradiction that
$\Lambda$ is not dominated for $X$.

Take also $n$ such that
$\Lambda\subset O_n$ and a neighborhood $\mathcal{U}$ of $X$ such that, for every $Y\in\mathcal{U}$,
$\max(Y,\cl(O_n))$ has a LPF-dominated splitting for $Y$ and
every $\sigma\in \max(Y,\cl(O_n))$ is Lorenz-like satisfying
$$
W^{ss,Y}(\sigma)\cap\max(Y,\cl(O_n))=\{\sigma\}.
$$
Since $\Lambda$ is not dominated for $X$ (and $\emptyset\neq\Lambda\subset CR(X,\cl(O_n))$) we see that
$X\not\in \mathcal{D}_n$. As $X\in \mathcal{G}$, we conclude that $X\in\mathcal{N}_n$.
Now, take a $C^3$ Kupka-Smale flow $Y\in \mathcal{N}_n\cap \mathcal{U}$ having no
minimal repellers whose dynamics is the suspension of a irrational rotation of the circle
(the nonexistence of such dynamics is dense in any topology).

Since $Y\in\mathcal{N}_n$, one has that $CR(Y,\cl(O_n))$ is not dominated.
It follows from the definitions that $CR(Y,\cl(O_n))\cap (\sink(Y)\cup\sou(Y))$ consists of isolated orbits.
Therefore,
$$
\Gamma=CR(Y,\cl(O_n))\setminus (\sink(Y)\cup\sou(Y))
$$
is a compact (and obviously invariant) set of $Y$.
Since $CR(Y,\cl(O_n))$ is not dominated, we have that {\em $\Gamma$ is not dominated for $Y$}.

On the other hand, $Y\in\mathcal{U}$ thus $\max(Y,\cl(O_n))$ has a LPF-dominated splitting and, also, every $\sigma\in
Sing(Y)\cap \max(Y,\cl(O_n))$ is Lorenz-like satisfying $W^{ss,Y}(\sigma)\cap \max(Y,\cl(O_n))=\{\sigma\}$.
As $\Gamma\subset CR(Y,\cl(O_n))\subset\max(Y,\cl(O_n))$ we conclude the same for $\Gamma$ instead of $\max(Y,\cl(O_n))$.
Since $\Gamma$ has neither sinks nor sources, we conclude from Theorem \ref{CY} that {\em $\Gamma$ is dominated for $Y$}.
This is a contradiction so $\Lambda$ is dominated for $X$. The proof follows.
\end{proof}

The above lemma implies the following result.

\begin{proposition}
\label{thhausdorff-limit}
Let $H$ be the Hausdorff limit of a sequence of periodic orbits
of a $C^1$ generic three-dimensional flow $X$.
If $H$ has a LPF-dominated splitting, then
$H$ is a hyperbolic set (if $H\cap \Sing(X)=\emptyset$),
a singular-hyperbolic attractor for $X$ (if $H$ contains a singularity of index $2$)
or a singular-hyperbolic attractor for $-X$ (otherwise).
\end{proposition}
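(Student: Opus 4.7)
The plan is to split into two main cases depending on whether $H$ contains a singularity.

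\textbf{Case A:} $H\cap \Sing(X)=\emptyset$. Here $H$ is a compact invariant set without singularities admitting an LPF-dominated splitting, so Lemma \ref{Q1} applies directly to give that $H$ is hyperbolic.

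\textbf{Case B:} $H\cap \Sing(X)\neq\emptyset$. Since $H$ is a Hausdorff limit of periodic orbits of $X$ and has an LPF-dominated splitting, Corollary \ref{coro} provides two mutually exclusive alternatives: either every $\sigma\in H\cap\Sing(X)$ is Lorenz-like for $X$ (hence of index $2$) with $H\cap W^{ss,X}(\sigma)=\{\sigma\}$, or the analogous statement holds for $-X$ (hence all singularities of index $1$). Because $-X$ lies in the same generic class as $X$, it suffices to treat the first subcase; the second is obtained by running the same argument on $-X$, producing a singular-hyperbolic attractor for $-X$ and so accounting for the ``otherwise'' clause in the statement.

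Assume then that every $\sigma\in H\cap\Sing(X)$ is Lorenz-like of index $2$ with $W^{ss,X}(\sigma)\cap H=\{\sigma\}$. Lemma \ref{branched}(1) gives $\cl(W^u(\sigma))=H$, Lyapunov stability of $H$ for $X$, and the property that every $q\in W^u(\sigma)\setminus\{\sigma\}$ satisfies $\omega(q)=H$. Since $W^u(\sigma)\subset\cl(W^u(\sigma))=H$, this forces $H$ to be transitive. With transitivity, LPF-dominated splitting, and the Lorenz-like condition $W^{ss,X}(\sigma)\cap H=\{\sigma\}$ in hand, Lemma \ref{basicLemma} supplies a (tangent) dominated splitting $T_H M=E\oplus F$ for $X$ on $H$.

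It remains to upgrade ``dominated'' to ``singular-hyperbolic attractor for $X$''. First, Lyapunov stability combined with transitivity furnishes a trapping neighborhood $U$ with $H=\bigcap_{t\ge 0}X_t(U)$, giving the attractor property. Second, $E$ is contracting: at any Lorenz-like singularity in $H$, $E$ coincides with the strong stable direction (eigenvalue $\lambda_2<0$), and a standard $C^1$ generic Pliss-type argument, exploiting that no periodic orbit inside $H$ can be a sink, spreads this contraction uniformly over $H$. Third, $F$ is volume expanding: at each $\sigma\in H\cap\Sing(X)$ the plane $F_\sigma$ is spanned by the eigendirections of $\lambda_3$ and $\lambda_1$ with $\lambda_1+\lambda_3>0$ by the Lorenz-like condition, giving pointwise volume expansion that propagates to a uniform estimate along the transitive orbit via the techniques of \cite{mp}. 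The main obstacle, as I see it, is this final paragraph: passing from the \emph{local} algebraic information at singularities (and periodic saddles) to \emph{uniform} contraction on $E$ and uniform volume expansion on $F$ throughout $H$. This is precisely where the genericity hypothesis, together with Lyapunov stability and the dense-unstable-branches structure provided by Lemma \ref{branched}, has to do the heavy lifting.
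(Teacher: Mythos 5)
Your proposal tracks the paper's argument closely up through the application of Lemma~\ref{basicLemma}: Case~A is identical (Lemma~\ref{Q1}), and in Case~B you correctly combine Corollary~\ref{coro} with Lemma~\ref{branched} to verify transitivity, obtain Lyapunov stability, and deduce a (tangent) dominated splitting on $H$. The divergence comes in the final paragraph, and it is a genuine gap rather than a different route. To pass from ``$H$ is a Lyapunov stable chain-recurrence class of a Lorenz-like singularity with a dominated splitting'' to ``$H$ is a singular-hyperbolic attractor,'' the paper invokes Theorem~C of~\cite{gy} (Gan--Yang), which is precisely the nontrivial theorem asserting that a chain-recurrence class of a singularity with a dominated splitting for a $C^1$ generic flow is a singular-hyperbolic attractor. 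You instead gesture at a ``standard $C^1$ generic Pliss-type argument'' for contraction of $E$ and at ``techniques of \cite{mp}'' for volume expansion of $F$, and you acknowledge yourself that this is the main obstacle. That acknowledgement is correct: neither assertion is standard in this singular setting, and making them rigorous would essentially amount to reproving the Gan--Yang result. In particular, volume expansion along $F$ is not something that simply ``propagates along the transitive orbit'' from the inequality $\lambda_1+\lambda_3>0$ at singularities; the whole difficulty of singular hyperbolicity is precisely the interaction between regular orbits and the non-uniformity caused by the singularity, and that is what Theorem~C of~\cite{gy} handles. So the proposal is structurally sound but incomplete: you should cite Theorem~C of~\cite{gy} at the point where you currently leave the heavy lifting unfinished.
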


\begin{proof}
If $H\cap \Sing(X)=\emptyset$, then $H$ is hyperbolic by Lemma \ref{Q1}.
Now, suppose that $H$ contains a singularity $\sigma$ of index $2$.
Clearly,
$H$ is nontrivial (i.e. not equal to a single orbit) and by Corollary \ref{coro} we also have that it is the chain-recurrent class
of $\sigma$.
Since $H$ contains a singularity of index $2$, we have from the first alternative of Corollary \ref{coro}
that every $\sigma\in H\cap\Sing(X)$ is Lorenz-like and satisfies
$H\cap W^{ss,X}(\sigma)=\{\sigma\}$. Then, we can apply Lemma \ref{basicLemma} to conclude that $H$ has a dominated splitting for $X$.
Since $H$ is the chain recurrent class of $\sigma$ we conclude from Theorem C in \cite{gy} that
$H$ is a singular-hyperbolic attractor for $X$.
If $H$ contains a singularity of index $1$, then
the same argument with $-X$ instead of $X$ implies
that $H$ is a singular-hyperbolic attractor for $-X$.
This concludes the proof.
\end{proof}

%\begin{theorem}
%\label{peorrito}
%Let $X$ a $C^1$ generic three-dimensional flow and
%$\mathcal{P}\subset \psad(X)$ be homoclinically closed.
%If $\cl(\mathcal{P})$ has a LPF-dominated splitting, then
%$\cl(\mathcal{P})$ has a spectral decomposition.
%\end{theorem}

\begin{proof}[Proof of Theorem B]
Let $X$ a $C^1$ generic three-dimensional flow and
$\mathcal{P}\subset \psad(X)$ be homoclinically closed.
Suppose that $\cl(\mathcal{P})$ has a LPF-dominated splitting.

By taking the Hausdorff limit of sequences of periodic orbits in $\mathcal{P}$ accumulating on the singularities
of $X$ in $\cl(\mathcal{P})$ we obtain
from Proposition \ref{thhausdorff-limit} that
every $\sigma\in \cl(\mathcal{P})\cap\Sing(X)$ belongs to a singular-hyperbolic attractor
for either $X$ or $-X$.

Using that $\mathcal{P}$ is homoclinically closed we obtain
\begin{equation}
\label{spectral}
\cl(\mathcal{P})=
\cl\left(\displaystyle\bigcup\{H(p):p\in\mathcal{P}\}\right).
\end{equation}

We claim that the family $\{H(p):p\in\mathcal{P}\}$ is finite.
Otherwise, there is an infinite sequence $p_k\in\mathcal{P}$ yielding
infinitely many distinct homoclinic classes $H(p_k)$.
Consider the closure $\cl(\bigcup_k H(p_k))$, which is a compact invariant set contained in $\cl(\mathcal{P})$.
If this closure does not contain any singularity, then it would be a hyperbolic set by Lemma \ref{Q1}.
Since the number of homoclinic classes contained in any hyperbolic set is finite, we obtain a contradiction 
proving that $\cl(\bigcup_k H(p_k))$ contains a singularity $\sigma\in\cl(\mathcal{P})$.
But, as we have seem, any of these singularities belong to a singular-hyperbolic attractor for either $X$ or $-X$.
Since there are finitely many singularities, it must exist distinct $k,k'$ satisfying $H(p_k)=H(p_{k'})$.
But this is an absurd, so the claim follows.
Combining the claim with (\ref{spectral}) and the well-known fact that
the homoclinic classes are pairwise disjoint \cite{cmp} we obtain the desired spectral decomposition.
\end{proof}

\vspace{5pt}

\section{Proof of Theorem A}

\noindent
In this section we shall prove our main result.
We start with some useful definitions.

Let $X$ be a three-dimensional flow. Recall that a periodic point saddle if it has eigenvalues of modulus less and bigger than $1$ simultaneously.
Analogously for singularities by just replace $1$ by $0$ and the eigenvalues by their corresponding real parts.
Denote by $\sad(X)$ the set of saddles of $X$.

A critical point $x$ is {\em dissipative} if
the product of its eigenvalues (in the periodic case) or the divergence $\di X(x)$ (in the singular case)
is less than $1$ (resp. $0$).
Denote by $\Crit_d(X)$ the set of dissipative critical points. Define
the {\em dissipative region} by $\dis(X)=\cl(\Crit_d(X))$.

For every subset $\Lambda\subset M$ we define
$$
W^s_w(\Lambda)=\{x\in M:\omega(x)\cap \Lambda\neq\emptyset\}.
$$
(This is often called
{\em weak region of attraction} \cite{bs}.)

The following result was proved in \cite{ams} in the nonsingular case.
The proof in the general case is similar.

\begin{theorem}
\label{move-attractor}
Let $X$ be a $C^1$ generic three-dimensional flow.
Then, $W^s_w(\dis(X))$ has full Lebesgue measure.
\end{theorem}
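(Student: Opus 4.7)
The plan is to follow the strategy of \cite{ams} in the nonsingular case, the new content being to verify that singularities do not obstruct the Liouville--volume contradiction. Set $B=M\setminus W^s_w(\dis(X))$, so that $x\in B$ iff $\omega(x)\cap \dis(X)=\emptyset$, and suppose for contradiction that $\Leb(B)>0$. Stratify $B=\bigcup_{k\ge 1} B_k$ with
$$
B_k=\{x\in B:\dist(\omega(x),\dis(X))\geq 1/k\},
$$
and fix $k$ with $\Leb(B_k)>0$. Thus for $x\in B_k$ one has the stronger information that $\omega(x)$ lies in the closed set $K_k:=\{y\in M:\dist(y,\dis(X))\geq 1/k\}$, which is a compact set disjoint from $\cl(\Crit_d(X))$.

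The engine is Liouville's formula $|\det DX_t(x)|=\exp\bigl(\int_0^t\di X(X_s(x))\,ds\bigr)$ together with the change-of-variables bound
$$
\int_{B_k}|\det DX_t(x)|\,d\Leb(x)=\Leb(X_t(B_k))\leq\Leb(M)<\infty,
$$
which forbids an exponential-growth lower bound $|\det DX_t(x)|\geq e^{ct}$ on any positive Lebesgue subset of $B_k$. So it suffices to prove that for Lebesgue-a.e.\ $x\in B_k$,
$$
\liminf_{t\to\infty}\frac{1}{t}\int_0^t\di X(X_s(x))\,ds>0.
$$
For this I would argue as follows. Any weak-$\ast$ accumulation point $\mu$ of the empirical measures $\mu_t^x=\frac{1}{t}\int_0^t\delta_{X_s(x)}\,ds$ is an $X$-invariant probability measure supported on $\omega(x)\subset K_k$. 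For $C^1$ generic $X$, Ma\~n\'e's ergodic closing lemma allows us to approximate the ergodic components of $\mu$ weakly by periodic measures $\nu_n$ supported on periodic orbits $O_n$ whose points tend to $\mathrm{supp}(\mu)$. Since $\mathrm{supp}(\mu)\subset K_k$ is disjoint from $\dis(X)=\cl(\Crit_d(X))$, for $n$ large the orbit $O_n$ lies in the open complement of $\cl(\Crit_d(X))$ and is therefore non-dissipative, whence $\int\di X\,d\nu_n=\frac{1}{t_{O_n}}\log|\det DX_{t_{O_n}}|\geq 0$. Passing to the limit yields $\int\di X\,d\mu\geq 0$ for every invariant $\mu$ on $\omega(x)$.

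The main obstacle, and the place where the singular case really differs from \cite{ams}, is upgrading the inequality $\int\di X\,d\mu\geq 0$ to a uniformly positive lower bound $\geq c>0$ on a positive-Lebesgue subset of $B_k$. In the nonsingular setting the degenerate case $\int\di X\,d\mu=0$ is excluded by a Franks-type perturbation of the multipliers of periodic orbits, making the strict inequality generic. In the singular setting one must additionally rule out invariant measures sitting on or near non-dissipative singularities with $\di X=0$; but by Lemma \ref{reslor} any singularity in $K_k$ accumulated by periodic orbits is Lorenz-like with eigenvalues bounded away from the critical balance for generic $X$, and a simple compactness argument on $K_k$ combined with the Franks-type perturbation on the periodic part gives the required uniform lower bound $c=c(k)>0$. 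Once this is achieved, $|\det DX_t|\geq e^{ct/2}$ on a positive Lebesgue set inside $B_k$ for $t$ in a positive-density subset of $[0,\infty)$, contradicting the volume bound displayed above and completing the proof.
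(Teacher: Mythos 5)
Your overall architecture is the right one and almost certainly matches the strategy of \cite{ams}: stratify the bad set, invoke Liouville's formula for $\det DX_t$, and then show a positive lower bound for the time-average of $\di X$ along a.e.\ orbit in the bad set. The reduction to ``$\liminf_{t\to\infty}\frac1t\int_0^t\di X(X_s(x))\,ds>0$ for a.e.\ $x\in B_k$'' is correct (with an Egorov step to get uniformity on a positive-measure subset), and the observation that accumulation measures $\mu$ of the empirical averages are supported in $K_k$, hence approximable (generically, by ergodic closing lemma plus density of periodic measures) by non-dissipative periodic orbits, giving $\int\di X\,d\mu\geq 0$, is also sound.

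The genuine gap is precisely the step you flagged, and your sketch for filling it does not work as stated. First, the difficulty is not really about singularities being or not being Lorenz-like: Lemma~\ref{reslor} controls the eigenvalue pattern of singularities near periodic orbits, but the measure $\mu$ causing trouble is a \emph{general} invariant measure on $K_k$ (for instance, supported on a normally hyperbolic invariant circle or a horseshoe with a conservative sub-measure), not a Dirac at a singularity; so Lemma~\ref{reslor} gives no divergence estimate for $\mu$. Second, a Franks-type perturbation makes a given periodic orbit $O_n$ dissipative for a \emph{perturbed} flow $X'$, but this does not by itself put $\mathrm{supp}(\mu)$ in $\dis(X)$: one still needs a semicontinuity argument for the set-valued map $X\mapsto\dis(X)$ (upper semicontinuity at a generic $X$, or a persistence argument for dissipative periodic orbits) to transfer the dissipative orbit of $X'$ back into $\dis(X)$ near $\mathrm{supp}(\mu)$. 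Without such a lemma, you only obtain $\int\di X\,d\mu\geq 0$, and the Fatou/volume argument then gives $\int\di X\,d\mu=0$ a.e.\ on $B_k$ rather than a contradiction. Third, even once all nearby periodic orbits in $K_k$ are strictly non-dissipative, the quantities $\frac1{\tau_n}\log|\det DX_{\tau_n}|$ can still tend to $0$ as $\tau_n\to\infty$, so a ``simple compactness argument on $K_k$'' does not produce the uniform constant $c(k)>0$; the uniformity has to come from a genericity statement about $\dis(X)$, not from compactness of $K_k$ alone. The proof therefore needs an explicit lemma of the form ``for $C^1$ generic $X$, any invariant measure $\mu$ with $\int\di X\,d\mu\le 0$ satisfies $\mathrm{supp}(\mu)\cap\dis(X)\neq\emptyset$,'' proved via Franks plus upper semicontinuity of $X\mapsto\dis(X)$ at residually many $X$; as written, your proposal leaves this essential ingredient as a hand-wave.
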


Given a homoclinic class $H=H_X(p)$ of a three-dimensional flow $X$ we denote by
$H_Y=H_Y(p_Y)$ the continuation of $H$, where $p_Y$ is the analytic continuation of $p$
for $Y$ close to $X$ (c.f. \cite{pt}).

The following lemma was also proved in \cite{ams}. In its statement $\m$ denotes the normalized Lebesgue measure of $M$.

\begin{lemma}
\label{local}
If $X$ is a $C^1$ generic three-dimensional flow, then
for every hyperbolic homoclinic class $H$
there are an open neighborhood $\mathcal{O}_{X,H}$ of $f$ and a residual subset
$\mathcal{R}_{X,H}$ of $\mathcal{O}_{X,H}$ such that the following properties are equivalent:
\begin{enumerate}
\item
$\m(W^s_Y(H_Y))=0$ for every $Y\in\mathcal{R}_{X,H}$.
\item
$H$ is not an attractor.
\end{enumerate}
\end{lemma}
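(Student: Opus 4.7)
The plan is to use persistence of hyperbolic sets to set up the neighborhood $\mathcal{O}_{X,H}$, and then to upgrade the dichotomy ``$H_Y$ is an attractor or not'' into a quantitative statement about the Lebesgue measure of its basin. Since $H$ is a hyperbolic compact invariant set, there is a $C^1$ neighborhood $\mathcal{O}_{X,H}$ of $X$ on which $H$ admits a Hausdorff-continuous hyperbolic continuation $H_Y = H_Y(p_Y)$. Moreover, being an attractor is a $C^1$-open condition for hyperbolic sets, so after shrinking $\mathcal{O}_{X,H}$ we may assume that either $H_Y$ is an attractor for every $Y \in \mathcal{O}_{X,H}$ or for none.

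For the direction $\neg(2) \Rightarrow \neg(1)$, suppose $H$ is an attractor with isolating block $U$ satisfying $X_t(\overline{U}) \subset \operatorname{int}(U)$ for $t$ sufficiently large. After shrinking $\mathcal{O}_{X,H}$, the same $U$ is an isolating block for $H_Y$ for every $Y \in \mathcal{O}_{X,H}$, so $U \subset W^s_Y(H_Y)$ and therefore $\m(W^s_Y(H_Y)) \geq \m(U) > 0$ uniformly in $Y$. This falsifies (1) for any choice of $\mathcal{R}_{X,H} \subset \mathcal{O}_{X,H}$.

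For $(2) \Rightarrow (1)$, assume $H$ is not an attractor; then $H_Y$ is a saddle-type hyperbolic basic set with $W^u(H_Y) \not\subset H_Y$ for every $Y \in \mathcal{O}_{X,H}$. Fix a nested fundamental system $V_1 \supset V_2 \supset \cdots$ of open neighborhoods of $H$ with $\bigcap_n V_n = H$, and set $\Lambda_n(Y) = \bigcap_{t \in \mathbb{R}} Y_t(V_n)$. Define
$$
\mathcal{R}_n = \bigl\{\, Y \in \mathcal{O}_{X,H} : \m(W^s_Y(\Lambda_n(Y))) < 1/n \,\bigr\},
$$
and $\mathcal{R}_{X,H} = \bigcap_n \mathcal{R}_n$. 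Openness of each $\mathcal{R}_n$ follows from upper semicontinuity of $Y \mapsto \Lambda_n(Y)$ in the Hausdorff topology together with the upper semicontinuity of the measure of its basin under $C^1$-perturbation. Density of $\mathcal{R}_n$ comes from approximating $Y$ in $C^1$ by $C^\infty$ flows $Y^k$, for which $\Lambda_n(Y^k)$ is a $C^{1+\alpha}$ saddle-type basic set whose basin has zero Lebesgue measure by the absolute continuity of the stable foliation (Bowen--Ruelle / Pesin-type arguments). Since $W^s_Y(H_Y) \subset W^s_Y(\Lambda_n(Y))$ for every $n$, any $Y \in \mathcal{R}_{X,H}$ satisfies $\m(W^s_Y(H_Y)) \leq 1/n$ for every $n$, and hence $\m(W^s_Y(H_Y)) = 0$, which is (1).

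The main obstacle is the density step: one must pass from the $C^{1+\alpha}$ fact that a saddle-type basic set has zero-measure basin (via absolute continuity of the stable lamination and the zero-measure transverse Cantor structure) to a $C^1$-generic statement, by $C^1$-approximation, persistence of the hyperbolic continuation of $\Lambda_n(Y)$, and semicontinuity of the basin measure. This parallels the nonsingular case treated in \cite{ams}, which serves as the template for both the construction of $\mathcal{R}_{X,H}$ and the density argument.
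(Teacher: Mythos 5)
The paper does not prove this lemma at all; it is cited verbatim from \cite{ams}, so there is no in-paper argument to compare against. That said, your overall framework (persistence of hyperbolic sets, a dichotomy decided at the level of $X$ itself, and a residual set built from countably many open-dense sets controlled by a $1/n$-measure threshold) is the right kind of argument, and your $\neg(2)\Rightarrow\neg(1)$ direction is sound: a trapping region $U$ with $X_t(\overline U)\subset\operatorname{int}(U)$ persists and satisfies $U\subset W^s_Y(H_Y)$, giving a uniform lower measure bound.

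The direction $(2)\Rightarrow(1)$ has a genuine gap, however: the openness of
$\mathcal{R}_n=\{Y:\m(W^s_Y(\Lambda_n(Y)))<1/n\}$
cannot rest on ``upper semicontinuity of the measure of the basin.'' The global basin is a countable increasing union
$W^s_Y(\Lambda_n(Y))=\bigcup_{T\ge 0}Y_{-T}(A^n_Y)$, where $A^n_Y=\bigcap_{t\ge 0}Y_{-t}(\overline{V_n})$ is the local trapping set, and its measure is a supremum over $T$; this quantity can jump \emph{up} under a $C^1$-perturbation (for instance if a small perturbation creates a sink inside $V_n$, making $\m(A^n_Z)>0$ and then $\m(W^s_Z(\Lambda_n(Z)))$ possibly close to $1$). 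So the sets $\mathcal{R}_n$ as you define them need not be open, and the residual set you build does not obviously exist.

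The fix is to drop the global basin from the open-dense scheme and work instead with the measure of the local trapping set for one fixed isolating neighborhood $V$ of $H$. Set $A_Y=\bigcap_{t\ge 0}Y_{-t}(\overline V)$; then $Y\mapsto\m(A_Y)$ \emph{is} upper semicontinuous, because $A_Y$ is a decreasing intersection of the compact sets $\bigcap_{0\le t\le T}Y_{-t}(\overline V)$, each of which depends upper-semicontinuously on $Y$ in the Hausdorff sense. Defining $\mathcal{R}_n=\{Y:\m(A_Y)<1/n\}$ gives openness for free; density then follows exactly by your $C^{1+\alpha}$/absolute-continuity argument applied to the saddle-type basic set $\max(Y,\overline V)$. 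Finally, since $W^s_Y(H_Y)=\bigcup_{k\in\mathbb{N}}Y_{-k}(A_Y)$ is a countable union of diffeomorphic images of $A_Y$, one has $\m(W^s_Y(H_Y))=0$ if and only if $\m(A_Y)=0$, so for $Y\in\bigcap_n\mathcal{R}_n$ one indeed obtains conclusion (1). With this substitution the remainder of your argument goes through.
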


We also need the following lemma essentially proved in \cite{ao}.

\begin{lemma}
\label{AO}
If $X$ is a $C^1$ generic three-dimensional flow, then every singular-hyperbolic
attractor with singularities of either $X$ or $-X$ has zero Lebesgue measure.
\end{lemma}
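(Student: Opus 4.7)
The plan is to argue by contradiction using the partial hyperbolicity of $A$ together with the Lorenz-like singularity to conclude that the volume-expanding central direction forces $A$ to be too thin to carry positive Lebesgue measure. By symmetry I may replace $X$ by $-X$ if necessary and consider a singular-hyperbolic attractor $A$ of $X$ containing a singularity $\sigma$. Since the flow is $C^1$ generic, Pugh's general density theorem together with the transitivity of the attractor implies that $A$ is the Hausdorff limit of a sequence of periodic orbits, so Lemma \ref{lor} applies and yields that $\sigma$ is Lorenz-like for $X$ with $A\cap W^{ss,X}(\sigma)=\{\sigma\}$.

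Suppose for contradiction that $\Leb(A)>0$. Since $\Sing(X)$ is finite, there is a Lebesgue density point $x_0\in A\setminus\Sing(X)$. I would take a small flow-box at $x_0$ adapted to the splitting $E^s\oplus F$ on $A$, and by Fubini single out a cross-section $\Sigma$ transverse to $X$ at $x_0$ on which $A\cap\Sigma$ has positive $2$-dimensional Lebesgue measure and $x_0$ is a density point of $A\cap\Sigma$ inside $\Sigma$.

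The heart of the argument is the analysis of the Poincar\'e return map $R$ to $\Sigma$, defined almost everywhere on $A\cap\Sigma$ since $A$ is a forward-invariant attractor. The volume-expansion of $F$ translates into uniform area-expansion of $DR$ on the tangent directions of $\Sigma$ lying in $F$, provided the return stays away from a neighborhood of the trace $W^{ss,X}(\sigma)\cap\Sigma$; together with bounded distortion on this regular part, iterating $R$ on the positive-area invariant set $A\cap\Sigma$ forces its iterates either to spread out unboundedly, contradicting compactness of $A$, or to accumulate on $W^{ss,X}(\sigma)\cap\Sigma$, contradicting $A\cap W^{ss,X}(\sigma)=\{\sigma\}$.

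The main obstacle is the singular behaviour of $R$ near $\sigma$, where return times diverge and uniform distortion estimates break down. This technical core is exactly what is carried out in \cite{ao}; accordingly, my plan is to verify that the $C^1$-generic hypothesis together with Lemma \ref{lor} places us in the setting of the cross-section analysis there, from which the conclusion $\Leb(A)=0$ is inherited.
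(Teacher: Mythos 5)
Your proposal takes a genuinely different route, and there is a gap in it. The paper's proof is a Baire--category argument, not a direct dynamical one: for each open set $U$ it invokes from \cite{ao} that the set $\mathcal{U}(U)_n$ of flows $Y$ for which $\max(Y,U)$ is singular-hyperbolic with singularities and has Lebesgue measure less than $1/n$ is \emph{open and dense} in $\mathcal{U}(U)$; it then intersects the resulting open--dense sets $\SR(O_m)_n$ over a countable basis $\{O_m\}$ and over $n$ to build one residual set $\mathcal{L}$, and symmetrizes for $-X$. The openness comes from upper semicontinuity of $\Leb(\max(Y,U))$ in $Y$, and the density from approximating by more regular flows for which distortion-type arguments actually apply. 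No cross-section or return map appears at the fixed generic flow $X$.

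The gap in your plan is the bounded-distortion step. You propose to fix a $C^1$ generic $X$, take a cross-section $\Sigma$ through a density point of $A$, and derive a contradiction from area-expansion of the return map $R$ together with ``bounded distortion on this regular part.'' For $C^1$ flows there is no distortion control whatsoever: this is exactly the phenomenon behind Bowen's example of a $C^1$ horseshoe of positive Lebesgue measure, and by the same mechanism one cannot rule out a priori that a $C^1$ singular-hyperbolic attractor carries positive measure. Thus the genericity cannot be used only to obtain the Lorenz-like structure of $\sigma$ (via Lemma~\ref{lor}) and then proceed with a pointwise argument; it must enter the measure estimate itself, which is precisely what the paper's density-plus-semicontinuity scheme does and what your plan does not. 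Your deferral of ``the technical core'' to \cite{ao} also mislocates what \cite{ao} provides: the statement actually quoted in the paper from \cite{ao} is the open-density of $\mathcal{U}(U)_n$, not a cross-section analysis valid for an arbitrary $C^1$ generic flow. To repair your argument you would need either to reproduce the Baire-category reduction yourself, or to pass to a dense set of smoother approximating flows, estimate $\Leb(\max(\cdot,U))$ there, and then transfer the estimate back via upper semicontinuity --- at which point you have rederived the paper's proof.
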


\begin{proof}
Given $U\subset M$
we define $\mathcal{U}(U)$ as the set of flows $Y$ such that $\max(Y,U)$ is
a singular-hyperbolic set with singularities of $Y$.
We shall assume that $U$ is open.
It follows that $\mathcal{U}(U)$ is open in $\diff$.

Now define $\mathcal{U}(U)_n$ as the set of $Y\in \mathcal{U}(U)$ such that
$\m(\max(Y,U))<1/n$. It was proved in \cite{ao} that $\mathcal{U}(U)_n$ is open and dense in $\mathcal{U}(U)$.

Define $\SR(U)_n=\SU(U)_n\cup (\mathfrak{X}^1(M)\setminus\cl(\SU(U))$ which is open and dense set in $\mathfrak{X}^1(M)$.
Let $\{U_m\}$ be a countable basis of the topology, and $\{O_m\}$ be the set of finite unions of such $U_m$'s.
Define
$$
\mathcal{L}=\bigcap_m\bigcap_n\SR(O_m)_n.
$$
This is clearly a residual subset of three-dimensional flows. We can assume without loss of generality that
$\mathcal{L}$ is symmetric, i.e., $X\in\mathcal{L}$ if and only if $-X\in \mathcal{L}$.
Take $X\in \mathcal{L}$. Let $\Lambda$ be a singular-hyperbolic attractor for $X$.
Then, there exists $m$ such that $\Lambda=\Lambda_X(O_m)$.
Then $X\in \SU(O_m)$ and so $X\in \SU(O_m)_n$ for every $n$ thus $\m(\Lambda)=0$.
Analogously, since $\mathcal{L}$ is symmetric, we obtain that $\m(\Lambda)=0$ for every singular-hyperbolic attractor with singularities
of $-X$.
\end{proof}

Now we prove the following result which is similar to one in \cite{ams}
(we include its proof for the sake of completeness). In its statement
$\psad_d(X)$ denotes the set of periodic dissipative saddles of a three-dimensional flow $X$.

\begin{theorem}
\label{fui}
Let $Y$ be a $C^1$ generic three-dimensional flow. If $\cl(\psad_d(Y))$ has a spectral decomposition,
then every homoclinic class $H$ associated to a dissipative periodic saddle satisfying
$\m(W^s_Y(H))>0$ is either a hyperbolic attractor or a singular-hyperbolic attractor for $Y$.
\end{theorem}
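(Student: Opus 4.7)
The plan is to use the spectral decomposition to force $H$ into one transitive piece $\Lambda$, and then to argue by cases on the type of $\Lambda$: in the hyperbolic case Lemma~\ref{local} does the work, in the singular-hyperbolic-for-$Y$ case the generic identification of singular-hyperbolic attractors with homoclinic classes does it, and the singular-hyperbolic-for-$-Y$ case is ruled out by a repeller-basin argument using Lemma~\ref{AO}.

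First, since $p\in\psad_d(Y)\subset\cl(\psad_d(Y))$ and $\psad_d(Y)$ is homoclinically closed by the Birkhoff--Smale Theorem \cite{hk}, the whole class $H=H_Y(p)$ lies inside $\cl(\psad_d(Y))$. Writing the spectral decomposition as $\cl(\psad_d(Y))=\Lambda_1\sqcup\cdots\sqcup\Lambda_k$ with each $\Lambda_i$ transitive and either hyperbolic or a singular-hyperbolic attractor for $Y$ or for $-Y$, the transitivity of $H$ together with the pairwise disjointness of the $\Lambda_i$ forces $H\subset\Lambda$ for a unique $\Lambda=\Lambda_i$.

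If $\Lambda$ is a hyperbolic transitive set, Smale's spectral theorem gives $\Lambda=H_Y(q)$ for every periodic $q\in\Lambda$; taking $q=p$ yields $H=\Lambda$ as a hyperbolic homoclinic class, and Lemma~\ref{local} (applied in its generic form obtained by intersecting the residual subsets $\mathcal{R}_{X,H}$ across a countable basis of hyperbolic homoclinic classes) upgrades the positive-basin hypothesis to ``$H$ is a hyperbolic attractor for $Y$''. If $\Lambda$ is a singular-hyperbolic attractor for $Y$, I would invoke the $C^1$ generic fact that such an attractor is the homoclinic class of any of its periodic saddles $q\in\psad(Y)\cap\Lambda$, so $\Lambda=H_Y(q)$; combined with the pairwise disjointness of homoclinic classes for $C^1$ generic flows \cite{cmp} and with $p\in\Lambda=H_Y(q)$, this forces $H=H_Y(p)=H_Y(q)=\Lambda$, i.e., $H$ is itself a singular-hyperbolic attractor for $Y$.

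It remains to rule out the case that $\Lambda$ is a singular-hyperbolic attractor for $-Y$. Then $\Lambda$ is a repeller for $Y$, admitting an isolating neighborhood $V$ with $Y_{-t}(V)\subset\interior V$ for every $t>0$; a standard no-return argument (points outside $V$ cannot enter it in positive time, and points of $V\setminus\Lambda$ must eventually leave $V$) yields $W^s_Y(\Lambda)=\Lambda$. Lemma~\ref{AO} furnishes $\m(\Lambda)=0$, hence $\m(W^s_Y(H))\le\m(W^s_Y(\Lambda))=\m(\Lambda)=0$, contradicting the hypothesis. The main obstacle is the $C^1$ generic identification $\Lambda=H_Y(q)$ of singular-hyperbolic attractors with homoclinic classes, together with the residual-set bookkeeping required to apply Lemma~\ref{local} uniformly across all relevant hyperbolic homoclinic classes; the filtration step for repellers and the Smale step are routine once the piece $\Lambda$ has been singled out.
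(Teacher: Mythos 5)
Your outline follows the same overall strategy as the paper — localize $H$ to a single transitive piece $\Lambda$ of the spectral decomposition, rule out the case ``singular-hyperbolic attractor for $-Y$'' via Lemma~\ref{AO} plus the observation that $W^s_Y(\Lambda)=\Lambda$ for a repeller, treat the ``attractor for $Y$'' case as immediate, and invoke Lemma~\ref{local} in the hyperbolic case. The repeller case and the attractor-for-$Y$ case are essentially correct as you sketch them (modulo a citation for the $C^1$ generic identification of singular-hyperbolic attractors with the homoclinic classes of their periodic saddles, a standard fact). The real issue is elsewhere.

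\medskip

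The genuine gap is your invocation of Lemma~\ref{local}. That lemma is a \emph{perturbative} statement: for a $C^1$ generic $X$ and a hyperbolic homoclinic class $H$ of $X$, it produces a neighborhood $\mathcal{O}_{X,H}$ of $X$ in $\diff$ and a set $\mathcal{R}_{X,H}$ residual \emph{in} $\mathcal{O}_{X,H}$, such that for $Z\in\mathcal{R}_{X,H}$ one has $\m(W^s_Z(H_Z))=0$ iff $H$ is not an attractor. To conclude anything about the flow $Y$ that you started with, you need $Y$ to actually lie in $\mathcal{R}_{X,H^i}$ for some nearby generic $X$ carrying a hyperbolic class $H^i$ whose continuation is $H$. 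Your proposed remedy — ``intersecting the residual subsets $\mathcal{R}_{X,H}$ across a countable basis of hyperbolic homoclinic classes'' — does not produce a residual set in $\diff$: each $\mathcal{R}_{X,H}$ is residual only inside the small open set $\mathcal{O}_{X,H}$, and an uncountable (or even countable, but overlapping) family of such sets has no reason to intersect along a residual subset of the whole space. You flag this as an ``obstacle,'' but it is in fact the heart of the matter, and it is precisely what the paper's proof supplies: upper-semicontinuity of $X\mapsto\cl(\psad_d(X))$ on a residual set $\mathcal{N}$, persistence of singular-hyperbolic attractors via \cite{a1} (the $\mathcal{R}^0_{X,C}$ sets and display (\ref{considera})), a countable dense sequence $X^i$ in $\mathcal{R}\setminus\mathcal{A}$, the replacement of $\mathcal{O}_{X^i}$ by \emph{pairwise disjoint} $\mathcal{O}'_{X^i}$ so that $\mathcal{R}'_{12}=\bigcup_i\mathcal{R}_{X^i}$ is residual in $\mathcal{O}_{12}=\bigcup_i\mathcal{O}_{X^i}$, and finally $\mathcal{R}_{11}=\mathcal{A}\cup\mathcal{R}'_{12}$. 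Without this (or an equivalent) construction you cannot legitimately pass from ``$\m(W^s_Y(H))>0$'' to ``$H$ is an attractor,'' and the proof does not close. The semicontinuity of $S(X)=\cl(\psad_d(X))$ is also what guarantees, via (\ref{tolo}), that every dissipative periodic saddle of $Y$ near $X$ is swallowed by one of the continuations $H^i_Y$, $A^j_Y$, $R^k_Y$; your sketch implicitly uses this too, again without justification.
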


\begin{proof}
Define the map $S:\diff\to 2^M_c$ by
$S(X)=\cl(\psad_d(X))$.
This map is clearly lower-semicontinuous, and so, upper semicontinuous in a residual subset
$\mathcal{N}$ (for the corresponding definitions see \cite{k1}, \cite{k}).

By the flow-version of the main result in \cite{a1}, there is a residual subset $\mathcal{R}_7$ of three-dimensional flows
$X$ such that for every singular-hyperbolic attractor $C$ for $X$ (resp. $-X$) there are neighborhoods
$U_{X,C}$ of $C$, $\mathcal{U}_{X,C}$ of $X$ and a residual subset $\mathcal{R}^0_{X,C}$ of $\mathcal{U}_{X,C}$ such that
for all $Y\in\mathcal{R}^0_{X,C}$ if $Z=Y$ (resp. $Z=-Y$) then

\begin{equation}
\label{considera}
C_Y=\displaystyle\bigcap_{t\geq0}Z_t(U_{X,C})
\mbox{ is a singular-hyperbolic attractor for }Z.
\end{equation}

Define $\mathcal{R}= \mathcal{N}\cap\mathcal{R}_7$.
Clearly $\mathcal{R}$ is a residual subset of three-dimensional flows.
Define
$$
\mathcal{A}=\{f\in \mathcal{R}:\cl(\psad_d(X))\mbox{ has no spectral decomposition}\}.
$$

Fix $X\in\mathcal{R}\setminus \mathcal{A}$.
Then, $X\in\mathcal{R}$ and
$\cl(\psad_d(X))$ has a spectral decomposition
$$
\cl(\psad_d(X))=\left(\displaystyle\bigcup_{i=1}^{r_X}H^i\right)\cup\left(\displaystyle\bigcup_{j=1}^{a_X} A^j\right)
\cup\left(\displaystyle\bigcup_{k=1}^{b_X} R^k\right)
$$
into hyperbolic homoclinic classes $H_i$
($1\leq i\leq r_X$), singular-hyperbolic attractors $A^j$ for $X$ ($1\leq j\leq a_X$), and
singular-hyperbolic attractors $R^k$ for $-X$ ($1\leq k\leq b_X$).

As $X\in \mathcal{R}_7$, we can consider for each $1\leq i\leq r_X$, $1\leq j\leq a_X$ and $1\leq k\leq b_X$ the neighborhoods
$\mathcal{O}_{X, H^i}$, $\mathcal{U}_{X,A^j}$ and $\mathcal{U}_{X,R^k}$ of $X$ as well as their residual subsets $\mathcal{R}_{X,H^i}$,
$\mathcal{R}^0_{X,A^j}$ and $\mathcal{R}^0_{X,R^k}$ given by Lemma \ref{local} and (\ref{considera}) respectively.

Define
$$
\mathcal{O}_X=\left(\displaystyle\bigcap_{i=1}^{r_X}\mathcal{O}_{X,H^i}\right)\cap
\left(\displaystyle\bigcap_{j=1}^{a_X}\mathcal{U}_{X,A^j}\right)\cap \left(\displaystyle\bigcap_{k=1}^{b_X}\mathcal{U}_{X,R^k}\right)
$$
and
$$
\mathcal{R}_X=\left(\displaystyle\bigcap_{i=1}^{r_X}\mathcal{R}_{X,H^i}\right)\cap
\left(\displaystyle\bigcap_{j=1}^{a_X}\mathcal{R}_{X,A^j}^0\right)\cap \left(\displaystyle\bigcap_{k=1}^{b_X}\mathcal{R}_{X,R^k}^0\right).
$$
Clearly $\mathcal{R}_X$ is residual in $\mathcal{O}_X$.

From the proof of Lemma \ref{local} in \cite{ams} we obtain for each $1\leq i\leq r_X$
a compact neighborhood $U_{X,i}$ of $H^i$ such that
\begin{equation}
\label{toloo}
H^i_Y=\displaystyle\bigcap_{t\in\mathcal{R}}Y_t(U_{X,i})\mbox{ is hyperbolic and equivalent to }H^i,
\textrm{ for all } Y\in \mathcal{O}_{Y,H^i}.
\end{equation}
As $X\in\mathcal{N}$, $S$ is upper semicontinuous at $X$ so we can further assume that
$$
\cl(\psad_d(Y))\subset \left(\displaystyle\bigcup_{i=1}^{r_X} U_{X,i}\right)\cup
\left(\displaystyle\bigcup_{j=1}^{a_X} U_{X,A^j}\right)\cup \left(\displaystyle\bigcup_{k=1}^{b_X} U_{X,R^k}\right),
\,\textrm{ for all }Y\in\mathcal{O}_{X}.
$$
It follows that
\begin{equation}
\label{tolo}
\cl(\psad_d(Y))=\left(\displaystyle\bigcup_{i=1}^{r_X}H^i_Y\right)\cup
\left(\displaystyle\bigcup_{j=1}^{a_X}A^j_Y\right)\cup \left(\displaystyle\bigcup_{k=1}^{b_X}R^k_Y\right),
\,\textrm{ for all }Y\in\mathcal{R}_X.
\end{equation}

Next we
take a sequence $X^i\in\mathcal{R}\setminus \mathcal{A}$ which is dense
in $\mathcal{R}\setminus \mathcal{A}$.

Replacing $\mathcal{O}_{X^i}$ by
$\mathcal{O}'_{X^i}$ where
$$
\mathcal{O}'_{X^0}=\mathcal{O}_{X^0}
\mbox{ and }
\mathcal{O}'_{X^i}=\mathcal{O}_{X^i}\setminus\left(\displaystyle\bigcup_{j=0}^{i-1}\mathcal{O}_{X^j}\right), \mbox{ for } i\geq1,
$$
we can assume that the collection
$\{\mathcal{O}_{X^i}:i\in\mathbb{N}\}$ is pairwise disjoint.

Define
$$
\mathcal{O}_{12}=\displaystyle\bigcup_{i\in\mathbb{N}}\mathcal{O}_{X^i}
\quad\mbox{ and }
\quad
\mathcal{R}'_{12}=\displaystyle\bigcup_{i\in\mathbb{N}}\mathcal{R}_{X^i}.
$$

We claim that $\mathcal{R}'_{12}$ is residual in $\mathcal{O}_{12}$.

Indeed, for all $i\in\mathbb{N}$ write $\mathcal{R}_{X^i}=\displaystyle\bigcap_{n\in \mathbb{N}} \mathcal{O}^n_i$,
where $\mathcal{O}^n_i$ is open-dense in
$\mathcal{O}_{X^i}$ for every $n\in\mathbb{N}$.
Since
$\{\mathcal{O}_{X^i}:i\in\mathbb{N}\}$ is pairwise disjoint, we obtain
$$
\displaystyle\bigcap_{n\in\mathbb{N}}
\displaystyle\bigcup_{i\in\mathbb{N}}\mathcal{O}^n_{i}
\subset
\displaystyle\bigcup_{i\in\mathbb{N}}
\displaystyle\bigcap_{n\in\mathbb{N}}\mathcal{O}^n_{i}=\displaystyle\bigcup_{i\in\mathbb{N}}\mathcal{R}_{X^i}=\mathcal{R}'_{12}.
$$
As $\displaystyle\bigcup_{i\in\mathbb{N}}\mathcal{O}^n_{X^i}$
is open-dense in $\mathcal{O}_{12}$, $\forall n\in \mathbb{N}$,
we obtain the claim.

Finally we define
$$
\mathcal{R}_{11}=\mathcal{A}\cup \mathcal{R}'_{12}.
$$
Since $\mathcal{R}$ is a residual subset of three-dimensional flows, we conclude as in Proposition 2.6 of \cite{m}
that $\mathcal{R}_{11}$ is also a residual subset of three-dimensional flows.

Take $Y\in\mathcal{R}_{11}$ such that $\cl(\psad_d(Y))$ has a spectral decomposition
and let $H$ be a homoclinic class associated to a dissipative saddle of $Y$.
Then,
$H\subset \cl(\psad_d(Y))$ by Birkhoff-Smale's Theorem \cite{hk}.

Since $\cl(\psad_d(Y))$ has a spectral decomposition, we have $Y\notin \mathcal{A}$ so $Y\in \mathcal{R}'_{12}$
thus $Y\in\mathcal{R}_X$ for some $X\in\mathcal{R}\setminus \mathcal{A}$.
As $Y\in\mathcal{R}_X$, (\ref{tolo}) implies
$H=H^i_Y$ for some $1\leq i\leq r_X$ or $H=A^j_Y$ for some $1\leq j\leq a_X$ or $H=R^k_Y$ for some $1\leq k\leq b_X$.

Now, suppose that $\m(W^{s}_Y(H))>0$. Since $Y\in\mathcal{R}_X$, we have $Y\in \mathcal{R}^0_{X,R^k}$ for all $1\leq k\leq b_X$.
As $W^s_Y(R^k_Y)\subset R^k_Y$ for every $1\leq k\leq b_X$, we conclude by Lemma \ref{AO} that $H\neq R^k_Y$ for
every $1\leq k\leq b_X$.

If $H=A^j_Y$ for some $1\leq j\leq a_X$ then
$H$ is an attractor and we are done. Otherwise,
$H=H^i_Y$ for some $1\leq i\leq r_X$. As $Y\in\mathcal{R}_X$, we have
$Y\in \mathcal{R}_{X,H^i}$ and, since $f\in \mathcal{R}_{12}$, we conclude from Lemma \ref{local}
that $H^i$ is an attractor. But by (\ref{toloo}) we have that $H^i_Y$ and $H^i$ are equivalent, so, $H^i_Y$ is an attractor too and we are done.
\end{proof}

\begin{proof}[Proof of Theorem A]
Let $X$ be a $C^1$ generic three-dimensional flow with only a finite number of sinks. Then,
we can prove as in \cite{ams} or \cite{PS} that
$\cl(\psad_d(X))$ has a LPF-dominated splitting.
Since $\psad_d(X)$ is homoclinically closed by the Birkhoff-Smale Theorem \cite{hk},
we conclude from Theorem B that $\cl(\psad_d(X))$ has a spectral decomposition.

Since $X$ is $C^1$ generic we can assume that $X$ is Kupka-Smale too.
Then, we have the following decomposition:
$$
\dis(X)
=\cl(\sad_d(X)\cap\Sing(X))\cup\cl(\psad_d(X))\cup\sink(X)
$$
yielding
$$
W^s_w(\dis(X))=
\left(\bigcup\{W^s(\sigma):\sigma\in\sad_d(X)\cap\Sing(X)\mbox{ and }W^s_w(\sigma)=W^s(\sigma)\}\right)
$$
$$
\cup
\left(\bigcup\{W^s_w(\sigma):\sigma\in\sad_d(X)\cap\Sing(X)\mbox{ and }W^s_w(\sigma)\neq W^s(\sigma)\}\right)\cup
$$
$$
W^s_w(\cl(\psad_d(X)))\cup W^s(\sink(X)).
$$

One can esasily check that the first element in the above union has zero measure.

Moreover, by the Hayashi's Connecting Lemma \cite{h},
we can assume without loss of generality that every $\sigma\in\sad_d(X)\cap\Sing(X)$ satisfying $W^s_w(\sigma)\neq W^s(\sigma)$ belongs to
$\cl(\psad_d(X))$.

Since $W^s_w(\dis(X))$ has full Lebesgue measure by Theorem \ref{move-attractor}, we conclude that
$$
\m(W^s_w(\cl(\psad_d(X)))\cup W^s(\sink(X)))=1.
$$
But, $\cl(\psad_d(X))$ has a spectral decomposition
$$
\cl(\psad_d(X))=\displaystyle\bigcup_{i=1}^rH_i
$$
into finitely many disjoint homoclinic classes $H_i$, $1\leq i\leq r$,
each one being either hyperbolic (if $H_i\cap\Sing(X)=\emptyset$)
or a singular-hyperbolic attractor for either $X$ or $-X$ (otherwise). Replacing above we obtain
$$
\m\left(\left(\displaystyle\bigcup_{i=1}^rW^s_w(H_i)\right)\cup W^s(\sink(X))\right)=1.
$$
Now the results in Section 3 of \cite{cmp}
imply that each $H_i$ can be written as
$H_i=\Lambda^+\cap \Lambda^-$,
where $\Lambda^\pm$ is a Lyapunov stable set for $\pm X$.
We conclude from Lemma 2.2 in \cite{cmp} that $W^s_w(H_i)=W^s(H_i)$ thus
$$
\m\left(\left(\displaystyle\bigcup_{i=1}^rW^s(H_i)\right)\cup W^s(\sink(X))\right)=1.
$$
Let $1\leq i_1\leq \cdots \leq i_d\leq r$ be such that $\m(W^s(H_{i_k}))>0$ for every $1\leq k\leq d$.
By Theorem \ref{fui} we have that each $H_{i_k}$ for $1\leq k\leq d$ 
is either a hyperbolic attractor or a singular-hyperbolic attractor for $X$.

As the basins of the remainder homoclinic classes in the collection
$H_1,\cdots,H_r$ are negligible, we can remove them from the above union yielding
$$
\m\left(
\left(\displaystyle\bigcup_{k=1}^dW^s(H_{i_k})\right)\cup W^s(\sink(X))
\right)=1.
$$
Since each $H_{i_k}$ is a hyperbolic or singular-hyperbolic attractor for $X$
and $\sink(X)$ consists of finitely many orbits, we are done.
\end{proof}

\end{document}